\renewcommand{\projlim}{\varprojlim}
\crefname{equation}{}{}
\crefname{enumi}{}{}
\numberwithin{equation}{section}
\theoremstyle{plain}
\newtheorem{Theorem}{Theorem}
\crefname{Theorem}{Theorem}{Theorems}
\newtheorem{theorem}[equation]{Theorem}
\newtheorem{proposition}[equation]{Proposition}
\newtheorem{lemma}[equation]{Lemma}
\newtheorem{corollary}[equation]{Corollary}
\theoremstyle{definition}
\newtheorem{definition}[equation]{Definition}
\newtheorem{example}[equation]{Example}
\newtheorem{question}[equation]{Question}
\theoremstyle{remark}
\newtheorem{remark}[equation]{Remark}
\let\oldSS\SS\let\SS\relax
\newcommand{\ZZ}{\mathbf{Z}}
\newcommand{\QQ}{\mathbf{Q}}
\newcommand{\SS}{\mathbf{S}}
\newcommand{\E}{\mathrm{E}}
\newcommand{\CAlg}{\operatorname{CAlg}}
\newcommand{\D}{\operatorname{D}}
\newcommand{\Fun}{\operatorname{Fun}}
\newcommand{\QCoh}{\operatorname{QCoh}}
\newcommand{\Shv}{\operatorname{Shv}}
\newcommand{\Sm}{\operatorname{Sm}}
\newcommand{\cofib}{\operatorname{cofib}}
\newcommand{\coker}{\operatorname{coker}}
\newcommand{\hyp}{\operatorname{hyp}}
\newcommand{\id}{\operatorname{id}}
\newcommand{\op}{\operatorname{op}}
\newcommand{\X}{\mathord{-}}
\newcommand{\unit}{\mathbf{1}}
\newcommand{\sd}{\mathbin{\triangle}}
\newcommand{\cat}[1]{\mathscr{#1}}
\newcommand{\Cat}[1]{\mathsf{#1}}
\newcommand{\shf}[1]{\mathcal{#1}}
\title{The smashing spectrum of sheaves}
\author{Ko Aoki}
\address{Max Planck Institute for Mathematics,
  Vivatsgasse 7, 53111 Bonn, Germany
}
\email{aoki@mpim-bonn.mpg.de}
\date{\today}
\begin{document}

\begin{abstract}
For an arbitrary \(\infty\)-topos,
  we classify the smashing localizations
  in the \(\infty\)-category
  of sheaves
  valued in derived vector spaces:
  Any of them is
  the restriction functor to a (unique) closed subtopos.
  Our proof is based on the existence of a Boolean cover.

  This result
  in particular gives us the first example
  of a nonzero presentably symmetric monoidal stable \(\infty\)-category
  whose smashing spectrum has no points.

  Combining this with
  the sheaves-spectrum adjunction,
  we obtain a Tannaka-type
  categorical reconstruction result for 
  locales.
\end{abstract}

\maketitle

\section{Introduction}\label{s:intro}

Given a presentably symmetric monoidal stable \(\infty\)-category~\(\cat{C}\),
understanding its localizations
is fundamental.
Particularly,
\emph{smashing localizations},
i.e.,
localizations \(L\colon\cat{C}\to\cat{C}\) that are equivalent to
\(L(\unit)\otimes\X\),
are classical and important in stable homotopy theory;
cf.~\cite[Section~1]{Ravenel84}.
A basic observation
is that
they form a \emph{frame},
i.e., a complete lattice
satisfying the distributivity law
\(
\bigvee_{i\in I}x\wedge y_i
=
x\wedge\bigvee_{i\in I}y_i
\) for any \(x\) and \((y_i)_{i\in I}\).
The associated locale,
for which we write \(\Sm(\cat{C})\),
is called
the \emph{smashing spectrum} of~\(\cat{C}\).
See~\cite{ttg-sm} and references therein
about this construction.

Computing \(\Sm(\cat{C})\)
for given~\(\cat{C}\)
is typically extremely difficult;
e.g.,
even the most standard case \(\cat{C}=\Cat{Sp}\),
the \(\infty\)-category of spectra,
contains the telescope conjecture.
Although the conjecture itself
was recently settled
in~\cite{BHIS},
our knowledge of \(\Sm(\Cat{Sp})\)
remains very limited.
The case \(\cat{C}=\D(R)\)
for a commutative ring~\(R\)
has been actively studied:
When \(R\) is noetherian,
\(\Sm(\D(R))\)
is the Zariski spectrum of~\(R\);
see~\cite[Section~3]{Neeman92}
or~\cite[Section~6]{BalmerFavi11}.
In general,
there are many more smashing localizations;
see, e.g.,
the calculation
when \(R\) is a valuation ring
in~\cite[Section~5]{BazzoniStovicek17}.
Still,
determining \(\Sm(\D(R))\)
is unapproachable in general.

In this paper,
as~\(\cat{C}\)
we consider
\(\Shv(\cat{X};\D(k))\)
for an \(\infty\)-topos~\(\cat{X}\)
and a field~\(k\).
For example,
for a topological space (or a locale)~\(X\),
by considering
\(\cat{X}=\Shv(X)^{\hyp}\),
we get the derived category of sheaves on~\(X\)
valued in classical \(k\)-vector spaces as~\(\cat{C}\).
There is an obvious class of smashing localizations on~\(\cat{C}\):
For any closed subtopos~\(\cat{Z}\),
the composite
\begin{equation*}
  \Shv(\cat{X};\D(k))
  \xrightarrow{i^*}
  \Shv(\cat{Z};\D(k))
  \xrightarrow{i_*}
  \Shv(\cat{X};\D(k))
\end{equation*}
is smashing due to the projection formula.
The main theorem of this paper states that 
there are no other smashing localizations:

\begin{Theorem}\label{main}
  For an \(\infty\)-topos \(\cat{X}\) and a field~\(k\),
  the smashing spectrum of~\(\Shv(\cat{X};\D(k))\) is
  canonically isomorphic to
  the locale of subterminal objects.
\end{Theorem}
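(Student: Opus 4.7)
The plan is to classify idempotent $\E_\infty$-algebras in $\cat{C} = \Shv(\cat{X};\D(k))$, using the standard equivalence between these and smashing localizations. There is an obvious frame homomorphism from the locale of subterminal objects of~$\cat{X}$ to $\Sm(\cat{C})$: a subterminal $U \subseteq 1_{\cat{X}}$ with closed complement $i\colon \cat{Z} \hookrightarrow \cat{X}$ maps to the smashing localization whose idempotent algebra is $i_* k_{\cat{Z}}$. Injectivity is straightforward, since the subtopos on which this algebra is supported recovers $\cat{Z}$, and hence~$U$. The content of the theorem is surjectivity: every idempotent $\E_\infty$-algebra $A$ in $\cat{C}$ is of this form.

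Following the hint in the abstract, I would exploit a Boolean cover — a geometric surjection $p\colon \cat{X}' \to \cat{X}$ whose frame of $(-1)$-truncated objects is a complete Boolean algebra, which exists by a Barr-type theorem. The pullback $p^*\colon \Shv(\cat{X};\D(k)) \to \Shv(\cat{X}';\D(k))$ is symmetric monoidal and conservative, so it preserves and reflects the property of being idempotent. Once the classification is known in the Boolean case, a descent argument along the Čech conerve of~$p$ — using that subterminal objects themselves satisfy effective descent along geometric surjections — should recover the desired subterminal on~$\cat{X}$.

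The main obstacle is therefore the Boolean case. Two key inputs drive the argument there: first, since~$k$ is a field, the only idempotent $\E_\infty$-algebras in~$\D(k)$ are~$0$ and~$k$; second, in a Boolean $\infty$-topos every subterminal has a genuine complement. Given an idempotent algebra $A$ on $\cat{X}'$, I would define $U' \subseteq 1_{\cat{X}'}$ as the join of all subterminals $V$ over which $A|_V \simeq k_V$, and then use Boolean complementation together with the field hypothesis to verify that $A$ vanishes on the complement of~$U'$, so that $A \simeq i'_* k$. Making this "local triviality" rigorous without recourse to points — in particular, showing that $A$ is necessarily discrete, so that one can reduce to classifying idempotent sheaves of ordinary $k$-algebras — is the most delicate step.
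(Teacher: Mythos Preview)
Your overall strategy---pass to a Boolean cover and classify idempotents there---is the paper's, but your execution diverges at exactly the point where a real gap appears.

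The gap is the descent step. Barr/Lurie's theorem only produces a Boolean cover of the \emph{hypercompletion} $\cat{X}^{\hyp}$, not of $\cat{X}$ itself, so the surjection $p\colon\cat{X}'\to\cat{X}$ you posit need not exist as stated. Even granting a cover, running \v{C}ech descent for subterminals along a surjection of $\infty$-toposes requires forming iterated fiber products of toposes and knowing that the assignment of subterminal frames is a sheaf for that cover; neither is a one-liner, and you would still have to compare $A$ with the descended idempotent, not just the underlying opens.

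The paper bypasses descent entirely by constructing the candidate open \emph{in $\cat{X}$ before ever passing to a cover}. For any $\E_1$-algebra $A$, pulling back $1\colon *\to\pi_0 A$ along $0\colon *\to\pi_0 A$ in $\Shv(\cat{X};\Cat{Set})$ yields a subterminal $U$ on which $A$ vanishes, with $k\to\pi_0 A$ monic on the closed complement. The Boolean cover is then invoked only to check that an idempotent $e\colon k\to E$ with $\pi_0 e$ monic is an equivalence: this needs mere conservativity plus a short trick through hypercompletion (the pullback of $e$ being an equivalence forces $k\to E^{\hyp}$ to be one, so $e$ is split monic in $\CAlg$, while idempotents are always epi there). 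Nothing has to be descended.

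For the Boolean case itself, the paper does substantial work you have not yet outlined: a structure theorem that every object of $\Shv(X;\D(k))$ is a direct sum of shifts $\Sigma^n k_U$, from which the heart has homological dimension~$0$. Your ``take the join of all $V$ with $A|_V\simeq k_V$'' does not explain why any nonempty such $V$ exists, nor why $A$ is discrete; the structure theorem is exactly what fills this in.
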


We note that our proof uses a nontrivial theorem
from (higher) topos theory as an input.

\begin{remark}\label{a521a769a8}
  The phenomenon of
  the smashing spectrum
  only depending on
  the underlying locale
  in \cref{main}
  is special to our choice of coefficients:
  An interesting example is
  given by Clausen--Mathew~\cite[Theorem~1.9]{ClausenMathew21},
  which says
  that for the étale \(\infty\)-topos
  of a nice scheme,
  the hypercompletion
  of \(\Cat{Sp}\)-valued sheaves
  is smashing.
  Typically, hypercompletion is nontrivial
  (even over~\(\mathrm{KU}\)),
  as demonstrated by the argument of Wieland
  in~\cite[Warning~7.2.2.31]{LurieHTT}
\end{remark}

The importance of this theorem is two-fold:
First,
as already explained in~\cite[Section~1.2]{ttg-sm},
we can deduce the following Tannaka-type duality
for locales
by exploiting the main result of~\cite{ttg-sm}:

\begin{Theorem}\label{tloc_k}
  For a field~\(k\),
  the functor
  \begin{equation*}
    \Shv(\X;\D(k))
    \colon
    \Cat{Loc}^{\op}
    \to\CAlg_{\D(k)}(\Cat{Pr})
  \end{equation*}
  is fully faithful.
\end{Theorem}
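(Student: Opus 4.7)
The plan is to derive \cref{tloc_k} formally from \cref{main} via the sheaves--spectrum adjunction established in~\cite{ttg-sm}. That adjunction, in a suitable form, exhibits the smashing spectrum $\Sm(\X)$ as a right adjoint to $\Shv(\X;\D(k))\colon\Cat{Loc}^{\op}\to\CAlg_{\D(k)}(\Cat{Pr})$. Full faithfulness of the left adjoint therefore reduces to showing that, for every locale~$X$, the unit map between~$X$ and $\Sm(\Shv(X;\D(k)))$ is an equivalence of locales.

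To exhibit such an equivalence abstractly, set $\cat{X}=\Shv(X)^{\hyp}$. By \cref{main} applied to this \(\infty\)-topos, the smashing spectrum $\Sm(\Shv(X;\D(k)))$ is canonically identified with the locale of subterminal objects of~$\cat{X}$. The subterminal objects of $\Shv(X)^{\hyp}$ are, by standard topos theory, precisely the opens of~$X$, so this locale of subterminals recovers $X$ itself. Hence $\Sm(\Shv(X;\D(k)))\simeq X$ at the level of objects.

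The main obstacle is then a compatibility check: one must verify that the canonical isomorphism produced by \cref{main} coincides with the unit of the adjunction from~\cite{ttg-sm}, rather than being merely an abstract identification of locales. Once this is granted, the unit is an equivalence on every~$X$ and \cref{tloc_k} follows. Since the overall strategy (and the construction of the adjunction) is already spelled out in~\cite[Section~1.2]{ttg-sm}, the role of the present paper is to supply the essential missing input---namely \cref{main}---after which the derivation of \cref{tloc_k} proceeds as outlined there.
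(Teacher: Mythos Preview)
Your approach is essentially the paper's own: use the sheaves--spectrum adjunction from~\cite{ttg-sm} and invoke \cref{main} to see that the unit is an equivalence, whence the left adjoint is fully faithful. One small caution: there is no need to pass through \(\cat{X}=\Shv(X)^{\hyp}\); the functor in the adjunction is \(\Shv(X;\D(k))\simeq\Shv(\Shv(X);\D(k))\), and \cref{main} applied to the (not necessarily hypercomplete) \(0\)-localic topos \(\Shv(X)\) already identifies its smashing spectrum with the locale of subterminals, which is~\(X\)---your route via the hypercompletion would require the extra (and in general false) identification \(\Shv(\Shv(X)^{\hyp};\D(k))\simeq\Shv(X;\D(k))\).
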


\begin{remark}\label{c69acd17e5}
  In \cref{tloc_k},
  note that
  the category of locales has a canonical bicategorical structure;
  see \cref{tloc_k_2} for a version
  where this is taken into account.
\end{remark}

\begin{remark}\label{xyy9wg}
  In \cref{tloc_k},
  we cannot replace~\(k\)
  with~\(\ZZ\); i.e.,
  \begin{equation}
    \label{e:wyoo2}
    \Shv(\X;\D(\ZZ))
    \colon
    \Cat{Loc}^{\op}
    \to\CAlg_{\D(\ZZ)}(\Cat{Pr})
  \end{equation}
  is not fully faithful:
  Consider the Sierpiński space, i.e., the Zariski spectrum of
  a discrete valuation ring.
  Then its sheaf \(\infty\)-category
  is equivalent to \(\Fun(\Delta^{1},\D(\ZZ))\) with
  the pointwise symmetric monoidal structure.
  There is a symmetric monoidal functor
  \(\Fun(\Delta^{1},\D(\ZZ))\to\D(\ZZ)\)
  that pointwise carries~\(F\)
  to the limit of \(\QQ\otimes F(0)\to\QQ\otimes F(1)\gets F(1)\),
  but it does not come from any locale map.

  However,
  we can show that
  \begin{equation*}
    \Shv(\X;\D(\ZZ)_{\geq0})
    \colon
    \Cat{Loc}^{\op}
    \to\CAlg_{\D(\ZZ)_{\geq0}}(\Cat{Pr})
  \end{equation*}
  is fully faithful
  by a similar proof to that of \cref{tloc_k}:
  For a locale~\(X\)
  we can identify
  the (unstable) smashing spectrum of \(\Shv(X;\D(\ZZ)_{\geq0})\)
  with~\(X\)
  by applying \cref{main} for prime fields.
  Then the desired result follows from considering \(\cat{V}=\D(\ZZ)_{\geq0}\)
  in the proof of \cref{tloc_k}.

  In future work,
  we will prove that \cref{e:wyoo2} is fully faithful when
  restricting to compact Hausdorff spaces.
  Our proof uses a certain ``continuous'' version of~\(\Sm\).
\end{remark}

Second,
\cref{main}
has the following consequence:

\begin{Theorem}\label{ptless}
  Any locale is the smashing spectrum
  of some presentably symmetric monoidal stable \(\infty\)-category.
  In particular,
  there is a nonzero presentably symmetric monoidal stable \(\infty\)-category
  whose smashing spectrum has no points.
\end{Theorem}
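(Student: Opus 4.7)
The plan is to realize every locale as a smashing spectrum by feeding its localic $\infty$-topos into \cref{main}. Given a locale~$L$, I would set $\cat{X}=\Shv(L)$, the \(\infty\)-topos of sheaves of spaces on~$L$, pick any field~$k$, and take $\cat{C}=\Shv(\Shv(L);\D(k))$. The localic reflection of \(\infty\)-topoi identifies the locale of subterminal objects of $\Shv(L)$ canonically with~$L$ itself, so \cref{main} immediately yields $\Sm(\cat{C})\cong L$. The only thing to verify is that $\cat{C}$ is nonzero whenever $L$ is nontrivial; this is immediate since $\Shv(L)$ then has a nonzero global sections functor, so $\Shv(L;\D(k))$ admits the nonzero free sheaf on any nonminimal open.

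For the second assertion, I would invoke the existence of a nonzero pointless locale, apply the construction above, and note that the resulting $\cat{C}$ is automatically nonzero (since such an $L$ is itself nonzero) while its smashing spectrum~$L$ has, by assumption, no points. A standard source of nonzero pointless locales is any atomless complete Boolean algebra presented as a locale — for instance, the measure algebra of the unit interval modulo null sets, or any of the classical Cohen/random-real locales. In each case pointlessness reduces to the well-known absence of completely prime filters in an atomless complete Boolean algebra.

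The main obstacle is essentially nonexistent: both ingredients — the identification of subterminals in $\Shv(L)$ with the opens of~$L$, and the existence of a concrete nonzero pointless locale — are classical facts of locale theory, and the argument amounts to combining them with \cref{main}. The only thing that requires any care is to phrase the correspondence $L\mapsto\Sm(\Shv(\Shv(L);\D(k)))$ canonically, so that the reader sees explicitly that \emph{every} locale (not merely every spatial one) arises in this way; this is handled by the localic reflection.
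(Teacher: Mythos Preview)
Your approach is correct and is precisely the argument the paper intends: it does not include a separate proof of this statement, treating it as an immediate corollary of \cref{main} together with the classical examples of nontrivial pointless Boolean locales recalled in \cref{random,cohen}. The only cosmetic point is that you may as well write $\Shv(L;\D(k))$ directly rather than $\Shv(\Shv(L);\D(k))$, since the paper already identifies the two.
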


Recall that
a locale is called \emph{spatial}
if it is isomorphic to
the frame of open subsets of a (unique sober) topological space.
We can ask the following:

\begin{question}\label{9ee60e159f}
  Under what condition on
  a presentably symmetric monoidal stable \(\infty\)-category~\(\cat{C}\),
  is the smashing spectrum \(\Sm(\cat{C})\)
  spatial?
\end{question}

This question is interesting
because if we know that \(\Sm(\cat{C})\) is spatial,
we can study smashing localizations of~\(\cat{C}\)
by studying the points
and topology of \(\Sm(\cat{C})\)
separately.
In the compactly generated rigid case,
people have attempted to prove the spatiality,
but it is still open;
see~\cite[Appendix~A]{BalchinStevenson}.
Lastly,
we note that even the answer to the following
is not known:

\begin{question}\label{b36b7f6b5d}
  Is \(\Sm(\D(R))\) always spatial
  for a commutative ring~\(R\)?
\end{question}

This paper is organized as follows:
We first recall facts
on sheaves on Boolean locales
in \cref{s:b_loc}.
Using those,
we prove \cref{main}
and deduce
(a refined version of) \cref{tloc_k}
in \cref{s:ex}.

\subsection*{Acknowledgments}\label{ss:ack}

I thank Alexander Efimov and Peter Scholze
for helpful communications;
the spatial case of \cref{main} was obtained
during a discussion with them.
I also thank Scholze for his useful comments on the draft.
I thank the Max Planck Institute for Mathematics
for its hospitality.

\subsection*{Conventions}\label{ss:conv}

Regarding smashing spectrum,
we continue using the notations from~\cite{ttg-sm}.

We do not use~\(\mathrm{L}\) or~\(\mathrm{R}\)
to signify how functors are derived;
e.g.,
\(\otimes\) on
derived \(\infty\)-categories
means the derived tensor product.

\section{Boolean locales and sheaves thereon}\label{s:b_loc}

In this section,
we study sheaves on Boolean locales.
In \cref{ss:bool}, we review
the notion of Boolean locales.
In \cref{ss:shv_b},
we consider sheaves on them.
In \cref{ss:shv_k},
we focus on the case of \(\D(k)\)-valued sheaves
for a field~\(k\).

\subsection{Boolean locales}\label{ss:bool}

We recall the following definition:

\begin{definition}\label{96e552485b}
  For an element~\(x\) in a distributive lattice,
  its \emph{complement} is an element~\(x'\)
  satisfying
  \(x\wedge x'=\bot\) and \(x\vee x'=\top\).
  By distributivity, it is necessarily unique
  as
  \begin{equation*}
    x'
    =x'\wedge\top
    =x'\wedge(x\vee x'')
    =(x'\wedge x)\vee(x'\wedge x'')
    =\bot\vee(x'\wedge x'')
    =x'\wedge x''
  \end{equation*}
  and similarly \(x''=x'\wedge x''\)
  hold
  when \(x'\) and~\(x''\) are complements of~\(x\).
  A \emph{Boolean lattice} is
  a distributive lattice in which
  every element has a complement.
  A locale is called \emph{Boolean}
  if its underlying frame is Boolean.
  In a Boolean lattice,
  we write \(x\setminus y\)
  for \(x\wedge y'\)
  where \(y'\) is the complement of~\(y\).
\end{definition}

\begin{example}\label{6981a75aa0}
  In the poset of open subsets of a topological space,
  complementable elements correspond to clopen subsets.
  Therefore, a spatial locale is Boolean if and only if it is discrete.
  In other words,
  a Boolean locale is either discrete or nonspatial.
\end{example}

\begin{remark}\label{58f5b2c0fe}
  A \emph{complete Boolean algebra}
  is another name for a Boolean locale (or frame).
\end{remark}

We here give two typical nondiscrete examples:

\begin{example}\label{random}
  Consider
  the poset of Borel subsets
  (or Lebesgue-measurable subsets)
  of the interval \([0,1]\).
  We take a quotient
  with respect to
  the equivalence relation
  identifying~\(B\) and~\(B'\)
  when their symmetric difference \(B\sd B'\)
  has measure zero.
  Then we get a Boolean frame,
  which is commonly called the \emph{random algebra}
  by set theorists (cf.~\cite{Solovay70}).
  This does not have any points.
\end{example}

\begin{example}\label{cohen}
  We again consider the poset of Borel subsets
  (or open subsets) of~\([0,1]\),
  but this time
  we consider the equivalence relation
  that identifies~\(B\) and~\(B'\)
  when \(B\sd B'\) is a meager set,
  i.e., a countable union
  of nowhere dense subsets.
  The quotient
  is again a Boolean frame,
  which is often called the \emph{Cohen algebra}
  because of its role in forcing.
  This does not have any points.
\end{example}

\begin{remark}\label{fd636e88b0}
  \Cref{random,cohen} are classical;
  most famously,
  they appeared in von~Neumann's 1936--1937 lectures
  on continuous geometry~\cite[Part~III]{vonNeumann60},
  where
  he showed that these two locales are not isomorphic.
\end{remark}

\begin{remark}\label{87d9356c3b}
  If a locale does not have any points,
  neither does its sheaf \(1\)-topos.
  The example of a nontrivial \(1\)-topos
  without points
  given in~\cite[IV.7.4]{SGA4a},
  which is attributed to Deligne there,
  is based on this observation;
  it is simply the sheaf \(1\)-topos of the locale given in \cref{random}.
\end{remark}

\subsection{Sheaves on a Boolean locale}\label{ss:shv_b}

We recall the following
from~\cite[Proposition~3.18]{ttg-sm}:

\begin{theorem}\label{shv_loc}
  Let \(X\) be a locale with frame of opens~\(F\)
  and \(\cat{C}\) an \(\infty\)-category with limits.
  Then a presheaf
  \(\shf{F}\colon F^{\op}\to\cat{C}\),
  is a sheaf if and only if
  it satisfies the following:
  \begin{enumerate}
    \item\label{i:red}
      The value \(\shf{F}(\bot)\) is final.
    \item\label{i:exc}
      The square
      \begin{equation*}
        \begin{tikzcd}
          \shf{F}(V\vee V')\ar[r]\ar[d]&
          \shf{F}(V)\ar[d]\\
          \shf{F}(V')\ar[r]&
          \shf{F}(V\wedge V')
        \end{tikzcd}
      \end{equation*}
      is cartesian
      for any two opens~\(V\) and~\(V'\).
    \item\label{i:fil}
      The morphism
      \begin{equation*}
        \shf{F}\Bigl(\bigvee D\Bigr)
        \to
        \projlim_{U\in D}
        \shf{F}(U)
      \end{equation*}
      is an equivalence
      for any directed subset~\(D\subset F\).
  \end{enumerate}
\end{theorem}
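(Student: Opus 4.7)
The plan is to prove both implications; the nontrivial direction is the converse.

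For necessity, I would apply the general sheaf condition to three particular covering families in~\(F\): the empty family covering~\(\bot\), the pair \(\{V,V'\}\) covering \(V \vee V'\), and a directed subset~\(D\) covering \(\bigvee D\). In each case, the limit over the associated Čech nerve simplifies, respectively, to the final object, to the Mayer--Vietoris pullback of \cref{i:exc}, and---using that \(D\) is cofinal in the sieve it generates, since \(\{d \in D : U' \leq d\}\) is directed for any \(U' \leq \bigvee D\)---to the ordinary limit over \(D^{\op}\).

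For sufficiency, I would prove the general sheaf condition for an arbitrary cover \(U = \bigvee_{i \in I} U_i\) in two stages. First, assuming only \cref{i:red} and \cref{i:exc}, I would show by induction on \(|J|\) that for every finite \(J \subset I\), \(\shf{F}(\bigvee_{j \in J} U_j)\) is the limit over the Čech nerve of the cover \(\{U_j\}_{j \in J}\). The cases \(|J| \leq 1\) follow from \cref{i:red}; for the inductive step, I pick \(i \in J\), set \(V = \bigvee_{j \in J \setminus \{i\}} U_j\), and apply \cref{i:exc} to \(V \vee U_i\). The inductive hypothesis applied to~\(V\) with cover \(\{U_j\}_{j \in J \setminus \{i\}}\) and to \(V \wedge U_i = \bigvee_j (U_j \wedge U_i)\) with cover \(\{U_j \wedge U_i\}_{j \in J \setminus \{i\}}\) (by distributivity in~\(F\)) then reassembles the Čech limit for~\(J\). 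Second, I would extend to arbitrary~\(I\) using \cref{i:fil}: the joins \(V_J = \bigvee_{j \in J} U_j\) over finite \(J \subset I\) form a directed subset of~\(F\) with supremum~\(U\), so \cref{i:fil} gives \(\shf{F}(U) \simeq \projlim_J \shf{F}(V_J)\), and the finite case together with a routine cofinality argument identifies this double limit with the Čech limit for \(\{U_i\}_{i \in I}\).

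The main obstacle I anticipate is coherent bookkeeping in the inductive step above: the diagram coming from \cref{i:exc} applied to \(V \vee U_i\), combined with the two inductive hypotheses, must be recognized as the Čech limit for~\(J\) via a gluing of the Čech diagrams indexed by nonempty subsets of \(J \setminus \{i\}\), by~\(\{i\}\), and by mixed subsets (contributing the intersections with~\(U_i\)). At the \(1\)-categorical level this is a standard pasting argument, but in the \(\infty\)-categorical setting one should organize it either as a cofinality statement along the inclusion of the poset of nonempty subsets of \(J \setminus \{i\}\) inside that of~\(J\), or iteratively via the pasting law for pullbacks. Once this coherence is established, the final cofinality step passing from finite to arbitrary covers is routine.
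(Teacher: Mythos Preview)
The paper does not give a proof of this statement at all: it is simply recalled from~\cite[Proposition~3.18]{ttg-sm} and used as input for the Boolean variant (\cref{shv_b}). So there is no proof in the present paper to compare your proposal against.

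That said, your outline is the standard one and is essentially correct. Two small comments. First, in the necessity argument for~\cref{i:fil}, the cofinality check should be phrased for \(U'\) in the sieve generated by~\(D\), not for arbitrary \(U'\leq\bigvee D\): an open below \(\bigvee D\) need not lie below any single element of~\(D\), so \(\{d\in D:U'\leq d\}\) could be empty in that generality; restricted to the sieve it is nonempty and directed, which is what you need. Second, the inductive step you flag as the main obstacle is indeed the only place requiring care, and your plan for it is right. The cleanest packaging is to observe that the poset of nonempty subsets of~\(J\) is the union of \(A=\{T:i\notin T\}\), the singleton \(\{\{i\}\}\), and \(C=\{S\cup\{i\}:\emptyset\neq S\subset J'\}\); since \(\{i\}\) is initial among subsets containing~\(i\), the limit over the latter two together is \(\shf{F}(U_i)\), and the interaction with~\(A\) is exactly through~\(C\), so the pullback of \(\projlim_A\) and \(\shf{F}(U_i)\) over \(\projlim_C\) recovers the full limit. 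All the indexing categories here are posets, so the requisite cofinality statements are checked by Quillen's Theorem~A at the level of nerves and present no genuine \(\infty\)-categorical difficulty.
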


We prove the following variant in the Boolean case:

\begin{theorem}\label{shv_b}
  Let \(X\) be a Boolean locale with frame of opens~\(F\)
  and \(\cat{C}\) an \(\infty\)-category with limits.
  Then a presheaf is a sheaf
  if and only if
  it carries disjoint joins (see \cref{178b2b9645})
  to products.
\end{theorem}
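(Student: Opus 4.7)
The strategy is to apply \cref{shv_loc}, which characterizes sheaves by the three conditions \cref{i:red,i:exc,i:fil}, and to verify that on a Boolean locale these are jointly equivalent to preserving disjoint joins.

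For the direction that sheaf-ness implies disjoint-joins-to-products, let \((y_i)_{i\in I}\) be pairwise disjoint. The finite case goes by induction on~\(|I|\): the case \(I=\emptyset\) is condition \cref{i:red}, and for \(|I|\geq 1\) one picks \(i_0\in I\) and applies \cref{i:exc} to \(V=y_{i_0}\) and \(V'=\bigvee_{i\neq i_0}y_i\); since \(V\wedge V'=\bot\), condition \cref{i:red} makes the bottom-right corner of the exchange square final, so the square exhibits \(\shf{F}(\bigvee_i y_i)\) as a product. The infinite case follows by expressing \(\bigvee_{i\in I}y_i\) as the directed join of its finite subjoins, applying \cref{i:fil}, and commuting the filtered limit with the finite products.

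For the converse, suppose \(\shf{F}\) sends disjoint joins to products. Condition \cref{i:red} is the empty case of the hypothesis. For \cref{i:exc}, set \(W=V'\setminus V\); by Boolean distributivity both \(V\vee V'=V\vee W\) and \(V'=(V\wedge V')\vee W\) are disjoint joins, so the hypothesis decomposes the square in \cref{i:exc} as the product of the tautologically cartesian square on \(\shf{F}(V)\to\shf{F}(V\wedge V')\) with the identity on \(\shf{F}(W)\), and is therefore cartesian.

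The main work is condition \cref{i:fil}. Given a nonempty directed \(D\subset F\) (the empty case is covered by \cref{i:red}), well-order it as \((U_\alpha)_{\alpha<\kappa}\) and set \(V_\alpha=U_\alpha\setminus\bigvee_{\beta<\alpha}U_\beta\). A straightforward transfinite induction shows that \((V_\alpha)_\alpha\) is pairwise disjoint with \(\bigvee_\alpha V_\alpha=\bigvee D\); moreover, for every \(U\in D\), the infinite distributivity of the frame turns the inequality \(U\leq\bigvee D\) into the disjoint decomposition \(U=\bigvee_\alpha(U\wedge V_\alpha)\). The hypothesis then yields \(\shf{F}(\bigvee D)=\prod_\alpha\shf{F}(V_\alpha)\) and \(\shf{F}(U)=\prod_\alpha\shf{F}(U\wedge V_\alpha)\), and commuting the limit with the product reduces the claim to showing that \(\shf{F}(V_\alpha)\to\projlim_{U\in D}\shf{F}(U\wedge V_\alpha)\) is an equivalence for each~\(\alpha\). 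This holds because the subset \(\{U\in D:U\geq U_\alpha\}\) is cofinal in~\(D\) by directedness and on it \(U\wedge V_\alpha=V_\alpha\), so the limit is that of a constant diagram on a nonempty filtered category. The main obstacle is not conceptual but bookkeeping: well-ordering~\(D\) requires the axiom of choice, and the transfinite induction verifying disjointness and the covering property \(\bigvee_\alpha V_\alpha=\bigvee D\) must be unpacked carefully.
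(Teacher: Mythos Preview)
Your proof is correct. The forward direction and the verification of \cref{i:red,i:exc} essentially match the paper (the paper uses a symmetric three-piece decomposition \(V\setminus V'\), \(V\wedge V'\), \(V'\setminus V\) for \cref{i:exc} where you use two pieces, but this is cosmetic). The real difference is in \cref{i:fil}: the paper first invokes \cref{chain}---the \(\infty\)-categorical fact that filtered colimits are detected on ordinal-indexed chains---to reduce to the case where \(D\) is already an ordinal, and then exploits the identity \(U_\beta=\bigvee_{\gamma\leq\beta}V_\gamma\) (valid because the chain is increasing) to rewrite the inverse limit as a telescoping product. You instead well-order the underlying set of \(D\) arbitrarily, which loses the identity \(U_\alpha=\bigvee_{\beta\leq\alpha}V_\beta\) but compensates with the cofinality of \(\{U\in D:U\geq U_\alpha\}\) in \(D\). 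Your route is more self-contained: it bypasses \cref{chain} and its supporting transfinite induction from Ad\'amek--Rosick\'y entirely, at the modest cost of a cofinality check (one should note that \(\{U\in D:U\geq U_\alpha\}\) is itself directed, hence weakly contractible, so the limit of the constant diagram over it is the constant value). The paper's route, conversely, packages the ordinal combinatorics into a reusable lemma about filtered \(\infty\)-categories.
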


\begin{remark}\label{5ba629d6e2}
  By \cref{shv_b},
  the notion of sheaves
  on a Boolean locale
  makes sense
  when 
  the coefficient \(\infty\)-category
  only has products
  and not necessarily all limits.
\end{remark}

\begin{definition}\label{178b2b9645}
  Let \(D\) be a Boolean lattice.
  We say that
  a family of elements \((a_i)_{i\in I}\)
  is (pairwise) \emph{disjoint} if
  \(a_i\wedge a_j=\bot\)
  for \(i\neq j\).
\end{definition}

\begin{proposition}\label{shv_kani}
  In the statement of \cref{shv_loc},
  when \(X\) is Boolean,
  \cref{i:exc} can be replaced
  by the condition that
  for any disjoint opens \(V\) and~\(V'\),
  the morphism
  \(\shf{F}(V\amalg V')\to\shf{F}(V)\times\shf{F}(V')\)
  is an equivalence.
\end{proposition}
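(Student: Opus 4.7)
The plan is to show that conditions (i), (iii), and the disjoint-join variant (ii'), i.e., the condition in the proposition, together imply the original Mayer--Vietoris condition (ii); the reverse implication is immediate, since for disjoint $V, V'$ one has $V \wedge V' = \bot$, and (i) then forces $\shf{F}(\bot)$ to be final, so the pullback in the (ii)-square collapses to a product.

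For the forward direction, the key input is the canonical tripartite decomposition available in any Boolean frame. For arbitrary opens $V, V'$, I would set $A = V \wedge V'$, $B = V \setminus V'$, and $C = V' \setminus V$. A short distributivity calculation shows that these three elements are pairwise disjoint, with $V = A \vee B$, $V' = A \vee C$, and $V \vee V' = A \vee B \vee C$. Iterating (ii') -- noting that the join of two disjoint opens remains disjoint from a third -- converts these identities into equivalences
\begin{equation*}
  \shf{F}(V) \simeq \shf{F}(A) \times \shf{F}(B), \quad \shf{F}(V') \simeq \shf{F}(A) \times \shf{F}(C), \quad \shf{F}(V \vee V') \simeq \shf{F}(A) \times \shf{F}(B) \times \shf{F}(C).
\end{equation*}
Under these identifications, the (ii)-square becomes the tautologically cartesian square exhibiting $\shf{F}(A) \times \shf{F}(B) \times \shf{F}(C)$ as the pullback of $\shf{F}(A) \times \shf{F}(B) \to \shf{F}(A) \gets \shf{F}(A) \times \shf{F}(C)$.

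I do not anticipate any serious obstacle here: once the Boolean decomposition is in hand, the remainder reduces to bookkeeping with finite products. Condition (iii) plays no role in this step and is inherited unchanged from the original criterion.
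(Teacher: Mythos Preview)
Your proposal is correct and matches the paper's proof essentially verbatim: the paper likewise uses the tripartite decomposition into \(V\setminus V'\), \(V\wedge V'\), \(V'\setminus V\) to rewrite the Mayer--Vietoris square as the tautologically cartesian product square, invoking only (i) and the disjoint-join condition. Your observation that (iii) is irrelevant here is also in line with the paper, which does not use it in this argument.
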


\begin{proof}
  When we consider
  disjoint opens \(V\) and~\(V'\)
  in~\cref{i:exc},
  we exactly get our condition
  by~\cref{i:red}.
  On the other hand,
  if \(\shf{F}\) satisfies
  \cref{i:red} and our condition,
  the diagram in \cref{i:exc}
  is equivalent to
  \begin{equation*}
    \begin{tikzcd}
      \shf{F}(V\setminus V')
      \times
      \shf{F}(V\wedge V')
      \times
      \shf{F}(V'\setminus V)
      \ar[r]\ar[d]&
      \shf{F}(V\setminus V')
      \times
      \shf{F}(V\wedge V')
      \ar[d]\\
      \shf{F}(V\wedge V')
      \times
      \shf{F}(V'\setminus V)
      \ar[r]&
      \shf{F}(V\wedge V')
      \rlap,
    \end{tikzcd}
  \end{equation*}
  which is cartesian.
\end{proof}

\begin{remark}\label{821682feb4}
  The same argument as in \cref{shv_kani}
  shows a similar statement
  for sheaves on a Stone space (aka totally disconnected compact Hausdorff space):
  A presheaf on a Stone space is a sheaf
  if and only if
  it carries directed joins to limits
  and finite disjoint joins of quasicompact opens to products.
\end{remark}

We need the following,
which is the \(\infty\)-categorical
version of~\cite[Corollary~1.7]{AdamekRosicky94}:

\begin{proposition}\label{chain}
  An \(\infty\)-category~\(\cat{C}\)
  has all filtered colimits
  if and only if
  it has colimits indexed by ordinals.

  For an \(\infty\)-category~\(\cat{C}\)
  having small filtered colimits,
  a functor \(F\colon\cat{C}\to\cat{D}\)
  preserves  filtered colimits
  if and only if it preserves
  colimits indexed by ordinals.
\end{proposition}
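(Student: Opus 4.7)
The forward direction of both statements is immediate: every ordinal, viewed as a poset, is filtered (every finite subset admits a maximum), so filtered colimits specialize to ordinal-indexed colimits.

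For the converse of the existence statement, the plan is first to reduce to the case of filtered posets. By Lurie's theorem \cite[Proposition~5.3.1.18]{LurieHTT}, any small filtered \(\infty\)-category receives a cofinal functor from the nerve of a filtered poset, and cofinal maps preserve and reflect colimits, so it suffices to build colimits over filtered posets.

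I would then proceed by transfinite induction on the cardinality \(\kappa\) of a filtered poset \(I\). The case of finite \(\kappa\) is trivial since \(I\) has a maximum. For the inductive step, enumerate \(I = \{i_\alpha\}_{\alpha < \kappa}\) and recursively construct an increasing family of filtered subposets \(I_\alpha \subseteq I\) with \(i_\alpha \in I_\alpha\) by closing \(\{i_\beta\}_{\beta \leq \alpha} \cup \bigcup_{\alpha' < \alpha} I_{\alpha'}\) under chosen upper bounds of finite subsets; standard cardinal arithmetic keeps \(|I_\alpha| < \kappa\) at both successor and limit stages, and by construction \(I = \bigcup_{\alpha < \kappa} I_\alpha\). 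Since \(I\) is the colimit of the ordinal-indexed diagram \(\alpha \mapsto I_\alpha\) in \(\Cat{Cat}_\infty\), the Fubini principle for iterated colimits yields
\[
  \varinjlim_I D \simeq \varinjlim_{\alpha < \kappa}\, \varinjlim_{I_\alpha} D|_{I_\alpha}
\]
for any diagram \(D \colon I \to \cat{C}\); the inner colimits exist by the inductive hypothesis, and the outer colimit is indexed by the ordinal \(\kappa\), hence exists by assumption.

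The functorial statement follows by the same induction: if \(F\) preserves all ordinal-indexed colimits then, inductively, it preserves all filtered colimits over posets of size \({<}\kappa\), so the preservation of \(\varinjlim_I D\) reduces to the preservation of the outer ordinal-indexed colimit. The main obstacle I expect is the set-theoretic bookkeeping needed to ensure that the filtered closures \(I_\alpha\) remain of size strictly less than \(\kappa\) at limit stages for \emph{singular} \(\kappa\), together with a careful justification of the \(\infty\)-categorical Fubini for \(I\) as the colimit of its subposets in \(\Cat{Cat}_\infty\) (via the fact that the nerve preserves filtered colimits).
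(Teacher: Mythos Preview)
Your proposal is correct and follows essentially the same route as the paper, which outsources the chain decomposition to \cite[Lemma~1.6]{AdamekRosicky94} and the Fubini step to \cite[Corollary~4.2.3.10]{LurieHTT}. Your two stated worries are not obstacles: closing under chosen binary upper bounds multiplies cardinality by at most \(\aleph_0\), so one obtains \(\lvert I_\alpha\rvert \leq \max(\lvert\alpha\rvert,\aleph_0) < \kappa\) uniformly in~\(\alpha<\kappa\) regardless of whether \(\kappa\) is regular or singular, and the iterated-colimit identity is precisely the cited corollary applied to the increasing union of simplicial subsets \(I_\alpha\subset I\).
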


\begin{remark}\label{213df081ac}
  Whereas \cref{chain} is true,
  beware that it is not the case
  that any directed poset
  has a cofinal map from an ordinal:
  The poset of the finite subsets
  of an uncountable set
  is such an example.
  However, we also note that
  any countable directed poset
  has a cofinal map from~\(\omega\).
\end{remark}

\begin{remark}\label{x4pjh0}
  The same proof shows
  the refinement of \cref{chain}
  with a bound given by a fixed cardinal~\(\kappa\).
  For example,
  an \(\infty\)-category
  has all \(\kappa\)-small filtered colimits
  if and only if
  it has colimits indexed by ordinals
  in~\(\kappa\).
\end{remark}

\begin{proof}
  We prove the first statement
  since the second one follows from
  the same argument.
  The ``only if'' direction is clear.
  We prove the ``if'' direction.
  Let \(P\) be a directed poset
  with cardinality~\(\kappa\).
  We want to show that
  any diagram \(P\to\cat{C}\) has colimits.
  We proceed by induction on~\(\kappa\).
  If \(\kappa\) is finite,
  \(P\) has a final object
  and its image in~\(\cat{C}\) is the colimit.
  Suppose that \(\kappa\) is infinite
  and that the statement holds for smaller cardinals.
  By~\cite[Lemma~1.6]{AdamekRosicky94}\footnote{Beware a small error in the proof given there:
    With their construction, \(I_{0}=\emptyset\) is not directed.
    We can fix this by reindexing.
  },
  we can write \(P\) as an increasing
  of subposets \(P_{\lambda}\) for \(\lambda<\kappa\)
  such that each \(P_{\lambda}\) is directed
  and of cardinality \(<\kappa\).
  By the inductive hypothesis,
  we can take the colimit of each
  restriction \(P_{\lambda}\to\cat{C}\)
  and then take the colimit of the resulting diagram
  indexed by~\(\kappa\), which exists by our assumption.
  This is a colimit of the original diagram
  by~\cite[Corollary~4.2.3.10]{LurieHTT}.
\end{proof}

\begin{proof}[Proof of \cref{shv_b}]
  By \cref{shv_kani},
  it suffices to show that
  for a presheaf \(\shf{F}\colon F^{\op}\to\cat{C}\)
  carrying finite disjoint joins to products,
  \cref{i:fil} of \cref{shv_loc} holds if and only if
  \(\shf{F}\) carries
  arbitrary disjoint joins to products.
  The ``only if'' direction is clear
  since we can write disjoint joins
  as a filtered colimit of finite disjoint unions.
  Hence we prove the ``if'' direction.

  By \cref{chain},
  it suffices to show that
  \(\shf{F}\) carries
  any colimit indexed by an ordinal~\(\alpha\)
  to a limit.
  Let \(U_{\beta}\) be such a diagram with join~\(U\).
  Then we take
  \begin{equation*}
    V_{\beta}
    =U_{\beta}
    \setminus\bigvee_{\gamma<\beta}U_{\gamma}.
  \end{equation*}
  This family is disjoint and has~\(U\) as its join.
  We have a diagram
  \begin{equation*}
    \shf{F}(U)
    \to\projlim_{\beta<\alpha}\shf{F}(U_{\beta})
    \to\prod_{\beta<\alpha}\shf{F}(V_{\beta})
  \end{equation*}
  and want to show that the first map is an equivalence.
  Since we know that the composite is an equivalence by assumption,
  it suffices to show that the second map is an equivalence.
  However,
  since
  \begin{equation*}
    U_{\beta}=\bigvee_{\gamma\leq\beta}U_{\gamma},
  \end{equation*}
  by using our assumption again,
  we can rewrite the second map as
  \begin{equation*}
    \projlim_{\beta<\alpha}
    \prod_{\gamma\leq\beta}\shf{F}(V_{\gamma})
    \to
    \prod_{\beta<\alpha}\shf{F}(V_{\beta}),
  \end{equation*}
  which is an equivalence.
\end{proof}

\Cref{shv_b} is useful
since it says that the sheaf condition
can be checked on homotopy groups.
Here we record some immediate consequences:

\begin{corollary}\label{xw148x}
  For a Boolean locale~\(X\)
  and \(n\geq-2\),
  the \((n+1)\)-connective/\(n\)-truncated factorization system
  (see~\cite[Example~5.2.8.16]{LurieHTT})
  for \(\Shv(X)\)
  can be computed pointwise.
\end{corollary}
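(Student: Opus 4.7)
The plan is to construct the factorization of a morphism \(f\colon\shf{F}\to\shf{G}\) in \(\Shv(X)\) pointwise in \(\mathcal{S}\) and then verify that this construction realizes the intrinsic factorization system of the \(\infty\)-topos \(\Shv(X)\). Concretely, I define a presheaf \(\shf{H}\) by applying the \((n+1)\)-connective/\(n\)-truncated factorization in \(\mathcal{S}\) to each section \(\shf{F}(U)\to\shf{G}(U)\).

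The first step is to show that \(\shf{H}\) is a sheaf. By \cref{shv_b}, which applies to \(\mathcal{S}\)-valued presheaves in view of \cref{5ba629d6e2}, this reduces to checking that \(\shf{H}\) carries disjoint joins to products. For a disjoint join \(U=\bigvee U_i\), the sheaves \(\shf{F}\) and \(\shf{G}\) send \(U\) to a product, so the question becomes whether the \((n+1)\)-connective/\(n\)-truncated factorization in \(\mathcal{S}\) commutes with arbitrary products. The right class is closed under arbitrary limits, and the left class is closed under arbitrary products because the fibers of a product of maps are the product of the fibers and products of \((n+1)\)-connective objects in \(\mathcal{S}\) remain \((n+1)\)-connective, as \(\pi_k\) commutes with products.

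Once \(\shf{H}\) is known to be a sheaf, it remains to identify the presheaf-level factorization \(\shf{F}\to\shf{H}\to\shf{G}\) with the factorization in \(\Shv(X)\). By uniqueness of factorizations it suffices to show that pointwise \(n\)-truncated morphisms are \(n\)-truncated in \(\Shv(X)\) and pointwise \((n+1)\)-connective morphisms are \((n+1)\)-connective in \(\Shv(X)\). The first is standard for sheaves on a locale: opens form a generating family of subterminals with \(\shf{H}(U)\simeq\mathrm{Map}(U,\shf{H})\), so pointwise \(n\)-truncatedness corresponds to the intrinsic notion. The second follows because, on a Boolean locale, the homotopy sheaves of any sheaf coincide with the pointwise homotopy groups: applying \cref{shv_b} to sets and groups, one sees that \(U\mapsto\pi_k(\shf{F}(U))\) is already a sheaf whenever \(\shf{F}\) is, and fibers of morphisms in \(\Shv(X)\) are computed pointwise.

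The main obstacle is the product-stability of the factorization system in \(\mathcal{S}\), which reduces to the fact that arbitrary products of \((n+1)\)-connective spaces are \((n+1)\)-connective; all remaining steps are formal consequences of \cref{shv_b} and standard facts about \(\infty\)-topoi.
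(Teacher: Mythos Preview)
Your argument is correct and is precisely the intended one: the paper states this corollary without proof, as an immediate consequence of \cref{shv_b}, and you have spelled out exactly why—namely, that the \((n+1)\)-connective/\(n\)-truncated factorization in \(\mathcal{S}\) is stable under arbitrary products (since \(\pi_k\) commutes with products), so the pointwise factorization lands in sheaves by \cref{shv_b} and then agrees with the intrinsic one by the standard identifications. There is nothing to add.
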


The following
were also obtained in~\cite[Section~A.4.1]{LurieSAG}:

\begin{corollary}\label{post}
  For a Boolean locale~\(X\),
  the \(\infty\)-topos~\(\Shv(X)\)
  is Postnikov complete.
\end{corollary}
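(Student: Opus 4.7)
The plan is to derive Postnikov completeness of $\Shv(X)$ as an essentially formal consequence of \cref{xw148x}. The relevant conditions are that for every $\shf{F}\in\Shv(X)$ the canonical map $\shf{F}\to\projlim_n\tau_{\leq n}\shf{F}$ is an equivalence and that $\Shv(X)$ is hypercomplete; both will be verified pointwise on the frame of opens~$F$.

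By \cref{xw148x}, for each open $U\in F$ and each $\shf{F}\in\Shv(X)$ we have $(\tau_{\leq n}\shf{F})(U)\simeq\tau_{\leq n}(\shf{F}(U))$, so truncation is computed pointwise. Moreover, limits in $\Shv(X)$ are computed pointwise, since the inclusion of $\Shv(X)$ into presheaves is a right adjoint (to sheafification). Consequently, the Postnikov map evaluated at~$U$ becomes the canonical map $\shf{F}(U)\to\projlim_n\tau_{\leq n}(\shf{F}(U))$ in the $\infty$-category of spaces, which is an equivalence because the $\infty$-topos of spaces is Postnikov complete. Since equivalences in $\Shv(X)$ are detected pointwise (the inclusion into presheaves is fully faithful and equivalences of presheaves are pointwise), the original Postnikov map is an equivalence in $\Shv(X)$. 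Hypercompleteness follows by the same mechanism: an $\infty$-connective sheaf has contractible truncations at every level, hence by the pointwise truncation formula it is pointwise $\infty$-connective and thus pointwise (and therefore globally) contractible.

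There is no serious obstacle beyond \cref{xw148x}; the nontrivial work—namely, the disjoint-join characterization of sheaves on a Boolean locale in \cref{shv_b} and the ordinal characterization of filtered colimits in \cref{chain}—has already been deployed to make pointwise truncation available. Once that fact is in hand, Postnikov completeness reduces mechanically to the corresponding property for the $\infty$-topos of spaces.
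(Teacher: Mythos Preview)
Your proof is correct and matches the paper's approach: the paper gives no explicit argument but records \cref{post} as an immediate consequence of \cref{shv_b}/\cref{xw148x} (with a pointer to~\cite[Section~A.4.1]{LurieSAG}), and your pointwise reduction to the Postnikov completeness of spaces is exactly how one fills in the details. A small remark on the formulation: the separate hypercompleteness check is redundant, since convergence $\shf{F}\simeq\projlim_n\tau_{\leq n}\shf{F}$ for all~$\shf{F}$ already implies it; if anything, under Lurie's definition (that $\Shv(X)\to\projlim_n\tau_{\leq n}\Shv(X)$ be an equivalence) one should also observe that an arbitrary Postnikov tower $(\shf{F}_n)_n$ satisfies $\tau_{\leq n}\bigl(\projlim_m\shf{F}_m\bigr)\simeq\shf{F}_n$, but this too follows immediately from the same pointwise argument.
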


\begin{corollary}\label{proj}
  For a Boolean locale~\(X\),
  the final object of \(\Shv(X)\)
  is projective
  in the sense that
  any epimorphism \(\shf{F}\to{*}\) has a section.
\end{corollary}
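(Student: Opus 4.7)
The plan is to deduce the statement directly from \cref{xw148x}. The key observation is that the case $n=-1$ of the $(n+1)$-connective/$n$-truncated factorization system is precisely the effective-epi/monomorphism factorization in an $\infty$-topos: $0$-connective morphisms are exactly effective epimorphisms, and $(-1)$-truncated morphisms are exactly monomorphisms. Consequently \cref{xw148x} tells us that a morphism in $\Shv(X)$ is an epimorphism if and only if it is pointwise an epimorphism of spaces.

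Applying this to the given epimorphism $\shf{F}\to{*}$, I conclude that for every open $U\in F$ the map $\shf{F}(U)\to{*}$ is an epimorphism of spaces, i.e., $\shf{F}(U)$ is nonempty. In particular the space of global sections $\shf{F}(\top)$ is nonempty, and since it is equivalent to the mapping space of morphisms ${*}\to\shf{F}$ in $\Shv(X)$, any chosen point of $\shf{F}(\top)$ yields a morphism ${*}\to\shf{F}$. Because ${*}$ is terminal, this morphism is automatically a section of $\shf{F}\to{*}$.

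The only delicate step I would want to double-check is the matching of conventions between the $(n+1)$-connective/$n$-truncated factorization system of \cite[Example~5.2.8.16]{LurieHTT} and the familiar effective-epi/monomorphism factorization on an $\infty$-topos. Beyond this bookkeeping no further ingredient is needed; in particular the Boolean hypothesis enters the argument only through its use in \cref{xw148x}.
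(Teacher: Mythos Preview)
Your argument is correct and is essentially the approach the paper intends: the corollary is listed as an immediate consequence of \cref{shv_b}, and \cref{xw148x} (which you invoke) is precisely the intermediate step that turns the sectionwise sheaf condition into a sectionwise description of the effective-epi/mono factorization. The bookkeeping you flag is standard: for $n=-1$ the $(n+1)$-connective/$n$-truncated factorization in an $\infty$-topos is the effective-epimorphism/monomorphism factorization, so your deduction that $\shf{F}(\top)$ is nonempty goes through as written.
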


\subsection{Linear algebra on a Boolean locale}\label{ss:shv_k}

In this section,
we prove the following structure theorem:

\begin{theorem}\label{b_hla}
  For a Boolean locale~\(X\)
  and a field~\(k\),
  any object of \(\Shv(X;\D(k))\)
  is (noncanonically) equivalent
  to a direct sum
  of objects of the form \(\Sigma^n{k_U}\)
  where \(n\) is an integer
  and \(U\) is an open of~\(X\).
  Here we write \(k_{U}\)
  for the extension by zero
  of the constant sheaf~\(k\) on~\(U\).
\end{theorem}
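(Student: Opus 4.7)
\emph{Plan.}
The approach is to first classify the objects of the heart of the pointwise $t$-structure on $\Shv(X;\D(k))$ (which exists by \cref{xw148x}) as direct sums of the $k_U$, and then to lift this to general objects via the Postnikov tower. A crucial preliminary is that each $k_U$ is projective in the strong sense that $\operatorname{Hom}(k_U,-) = \Gamma(U,-) \colon \Shv(X;\D(k)) \to \D(k)$ is $t$-exact; this is immediate from pointwiseness of the $t$-structure. The same conclusion, via $\operatorname{Hom}(\bigoplus_i k_{U_i},-) = \prod_i \Gamma(U_i,-)$, extends to any coproduct of $k_{U_i}$'s.

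For a discrete sheaf $\mathcal{V}$ of $k$-vector spaces on $X$, I would invoke Zorn's lemma on the poset of families $\{(U_i,s_i)\}$ with $s_i \in \mathcal{V}(U_i)$ for which the induced map $\bigoplus_i k_{U_i} \to \mathcal{V}$ is a monomorphism; chains are bounded by unions, since monomorphisms in Grothendieck abelian categories are stable under filtered colimits. Suppose a maximal family produces $\mathcal{G} \subsetneq \mathcal{V}$; then some $t \in \mathcal{V}(V)$ does not lie in $\mathcal{G}(V)$. Using \cref{shv_b}, the join $W$ of a maximal disjoint family of opens $W_\alpha \subseteq V$ with $t|_{W_\alpha} \in \mathcal{G}(W_\alpha)$ is itself the largest such open in $V$, and satisfies $t|_W \in \mathcal{G}(W)$ via the product decomposition. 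On $V' = V \setminus W$, the restriction $t|_{V'}$ then satisfies $t|_{W'} \notin \mathcal{G}(W')$ for every nonempty $W' \subseteq V'$. A direct disjoint-join computation---using that $\lambda t \in \mathcal{G}$ with $\lambda \neq 0$ forces $t \in \mathcal{G}$ over a field---shows that the augmented family remains injective, contradicting maximality. Hence $\mathcal{V}$ is a direct sum of $k_U$'s.

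For general $\mathcal{F}$, each $\pi_n\mathcal{F}$ is discrete by \cref{xw148x}, hence a direct sum of $k_U$'s by the previous step and thus projective. The fiber sequence $\tau_{\geq n+1}\mathcal{F} \to \tau_{\geq n}\mathcal{F} \to \Sigma^n\pi_n\mathcal{F}$ therefore splits---the obstruction $\operatorname{Ext}^1(\Sigma^n\pi_n\mathcal{F}, \tau_{\geq n+1}\mathcal{F})$ vanishes because $t$-exactness of $\operatorname{Hom}(\pi_n\mathcal{F},-)$ carries the $(n+1)$-connective $\tau_{\geq n+1}\mathcal{F}$ to $(n+1)$-connective objects of $\D(k)$---yielding a section $\Sigma^n\pi_n\mathcal{F} \to \tau_{\geq n}\mathcal{F}$ which is the identity on $\pi_n$. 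Composing with $\tau_{\geq n}\mathcal{F} \to \mathcal{F}$ and assembling over $n$ produces a map $\bigoplus_n \Sigma^n\pi_n\mathcal{F} \to \mathcal{F}$ that is an isomorphism on every $\pi_m$ (using that $\pi_m$ commutes with coproducts for the pointwise $t$-structure) and hence an equivalence by $t$-completeness---a property inherited pointwise from $\D(k)$ (cf.\ \cref{post}).

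The conceptual heart of the argument is the Zorn step: establishing existence of the maximal $\mathcal{G}$-vanishing open $W$ and verifying that augmentation preserves injectivity both rely essentially on the disjoint-join sheaf condition \cref{shv_b}. Once projectivity of $k_U$ is in hand, the bootstrap is formal modulo the connectivity calculation above.
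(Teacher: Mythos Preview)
Your proposal is correct and follows essentially the same strategy as the paper: first classify objects of the heart via a transfinite construction (the paper's \cref{b_la}), then deduce the derived statement from projectivity of the~\(k_U\) (the paper's \cref{b_hdim}) together with Postnikov completeness (\cref{post}). Where the paper well-orders~\(\shf{F}(X)\) and builds the decomposition by recursion, you use Zorn's lemma on independent families; these are interchangeable, and your explicit Postnikov-splitting step is exactly what underlies the paper's terse appeal to homological dimension zero.

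One point deserves care. In your extension step you write that ``\(\lambda t \in \mathcal{G}\) with \(\lambda \neq 0\) forces \(t \in \mathcal{G}\) over a field''. But here \(\lambda\) is a section of the constant sheaf~\(k\) over some open \(W' \subseteq V'\), not an element of~\(k\), and on a nondiscrete Boolean locale \(k(W')\) is typically much larger than~\(k\); so the implication as stated is false. What is true, and sufficient, is that a nonzero \(\lambda \in k(W')\) becomes invertible on some nonempty open \(W'' \subseteq W'\): the kernel of \(\lambda\colon k_{W'}\to k_{W'}\) is \(k_{W'''}\) for some \(W'''\) by \cref{c69ee4a6e9}, and over \(W'' = W'\setminus W'''\) the map is an injective endomorphism of~\(k_{W''}\), hence an isomorphism by the same lemma. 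Then \(t\rvert_{W''} = -\lambda\rvert_{W''}^{-1}\, g\rvert_{W''} \in \mathcal{G}(W'')\), contradicting your choice of~\(V'\). With this adjustment your argument goes through; note that it implicitly relies on \cref{c69ee4a6e9}, which plays the same foundational role in the paper's recursion.
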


We first prove the following underived variant;
note that \(\D(k)^{\heartsuit}\) denotes
the \(1\)-category of discrete \(k\)-vector spaces:

\begin{proposition}\label{b_la}
  For a Boolean locale~\(X\)
  and a field~\(k\),
  any object of
  the \(1\)-category of discrete \(k\)-linear sheaves
  \(\Shv(X;\D(k)^{\heartsuit})\)
  is (noncanonically) 
  the direct sum of the sheaves of the form~\(k_U\)
  for \(U\subset X\) an open.
\end{proposition}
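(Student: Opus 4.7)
The plan is to decompose a given $k$-linear sheaf~$\shf{F}$ as a direct sum of sheaves of the form~$k_{U}$ by a Zorn's lemma / maximal-family argument; the crucial input is that each $k_{U}$ is projective in the abelian category $\Shv(X;\D(k)^{\heartsuit})$. To establish projectivity: by the adjunctions among free $k$-modules, underlying sheaves of sets, and $i_{!}\dashv i^{*}$ for the open immersion $i\colon U\hookrightarrow X$, one has $\mathrm{Hom}(k_{U},\shf{F})=\shf{F}(U)$, so projectivity amounts to exactness of the ``sections over $U$'' functor. Any epimorphism of $k$-linear sheaves is stalk-surjective, hence an epimorphism of underlying sheaves of sets, and \cref{proj} applied on the Boolean locale~$U$ (the terminal sheaf on~$U$ is projective) allows one to lift any $s\in\shf{G}(U)$ to $\shf{F}(U)$ by taking a global section of its preimage sheaf.

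With projectivity in hand, consider the poset of families $(U_{i},s_{i})_{i\in I}$, with $s_{i}\in\shf{F}(U_{i})$, such that $\bigoplus_{i\in I}k_{U_{i}}\to\shf{F}$ is a monomorphism. This poset is nonempty and closed under unions of chains (filtered colimits are exact in $\Shv(X;\D(k)^{\heartsuit})$), so Zorn yields a maximal family; let $\shf{F}'\subset\shf{F}$ be the resulting subsheaf and $\shf{G}=\shf{F}/\shf{F}'$. If $\shf{G}\neq0$, pick any nonzero $t\in\shf{G}(W)$ and let $V\leq W$ be the largest open on which $t$ vanishes. The Boolean decomposition $k_{W}=k_{V}\oplus k_{W\setminus V}$ shows that the map $k_{W}\to\shf{G}$ classified by~$t$ factors through $k_{W\setminus V}\to\shf{G}$, and this factorization is injective: a section $a$ of $k_{W\setminus V}$ over $W'\leq W\setminus V$ with $a\cdot t|_{W'}=0$, after being written as constant values $a_{j}\in k$ on some partition $W'=\bigvee_{j}W'_{j}$, must have every $a_{j}=0$, since $a_{j}\neq0$ would be invertible ($k$ is a field) and would force $t|_{W'_{j}}=0$, hence $W'_{j}\leq V\wedge(W\setminus V)=\bot$. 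Using projectivity, this injection lifts to a map $k_{W\setminus V}\to\shf{F}$ whose image meets~$\shf{F}'$ trivially, yielding a strictly larger family in the poset---a contradiction.

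The main obstacle is the projectivity of $k_{U}$: this rests essentially on the topos-theoretic input \cref{proj}, and is the place where Booleanness of~$X$ enters the argument. The injectivity check in the Zorn step is a brief additional computation that crucially uses both the field hypothesis on~$k$ (for invertibility of nonzero scalars) and the Boolean identity $V\wedge(W\setminus V)=\bot$; the rest is a standard exchange-style maximality argument.
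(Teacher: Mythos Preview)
Your argument is correct and reaches the same conclusion by a genuinely different route. The paper proceeds by transfinite recursion: it well-orders the set of global sections \(\shf{F}(X)\) and, at each stage, uses the classification of quotients of \(k_X\) (\cref{b5c46f15e8}) to peel off a summand \(k_{U_s}\), finally invoking \cref{0a9344a400} to see that the resulting map is surjective. You instead run a Zorn argument on independent families and obtain the contradiction at a maximal family by a direct injectivity computation exploiting the Boolean identity \(V\wedge(W\setminus V)=\bot\) together with invertibility of nonzero scalars; this bypasses the paper's auxiliary \cref{c69ee4a6e9,b5c46f15e8} entirely. Both proofs rest on the same essential input, namely projectivity of \(k_U\) coming from \cref{proj}, so neither is strictly more elementary; your version is perhaps more streamlined, while the paper's is more explicit about the shape of the decomposition. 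Two small remarks: the phrase ``stalk-surjective'' is not literally meaningful when \(X\) has no points, but your actual conclusion (epimorphism on underlying sheaves of sets) is correct and suffices; and the claim that a section of \(k_{W\setminus V}\) is constant on a partition deserves a word of justification (it follows from \(k_X\simeq\coprod_{i\in k}{*}\) and universality of coproducts in a topos).
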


We need several lemmas to prove this:

\begin{lemma}\label{c69ee4a6e9}
  Let \(X\) be a locale
  and \(k\) a field.
  Any subobject of \(k_X\in\Shv(X;\D(k)^{\heartsuit})\)
  is of the form~\(k_U\hookrightarrow k_X\)
  for a unique open~\(U\).
\end{lemma}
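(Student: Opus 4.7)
The approach is to define an open \(U\) canonically from the subsheaf \(\shf{F}\) and verify \(\shf{F}=k_U\) as subsheaves of \(k_X\). Specifically, set
\begin{equation*}
  U = \bigvee \{V \text{ open in } X : 1_V \in \shf{F}(V)\},
\end{equation*}
where \(1_V \in k_X(V)\) denotes the canonical unit section, i.e., the image of \(1 \in k\) under \(k \to k_X(V)\). Since the restriction of \(1_V\) to any smaller open \(W \leq V\) equals \(1_W\), the defining family is closed under further restriction; consequently, the compatible family \((1_V)_V\) glues, by the sheaf condition on \(\shf{F}\), to a section \(1_U \in \shf{F}(U)\).

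The key structural input is the local description of \(k_X\): since \(k_X\) is the sheafification of the constant presheaf \(V \mapsto k\), any section \(s \in k_X(V)\) becomes, after refining \(V\) by a suitable cover \(\{V_i\}\), of the form \(s|_{V_i} = \lambda_i \cdot 1_{V_i}\) with scalars \(\lambda_i \in k\). Granting this, I verify both inclusions of subsheaves of \(k_X\). For \(k_U \subseteq \shf{F}\): a section of \(k_U(W)\) is locally of the form \(\lambda_i 1_{V_i}\) with \(V_i \leq U\) whenever \(\lambda_i \neq 0\); each piece \(\lambda_i 1_{V_i}\) lies in \(\shf{F}(V_i)\) by \(k\)-linearity (using \(1_{V_i} \in \shf{F}(V_i)\), which holds since \(V_i \leq U\)), and the sheaf axiom for \(\shf{F}\) assembles these into an element of \(\shf{F}(W)\). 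For \(\shf{F} \subseteq k_U\): given \(s \in \shf{F}(V)\) locally written as \(\lambda_i 1_{V_i}\), on pieces where \(\lambda_i \neq 0\) the \(k\)-linearity of \(\shf{F}\) (inverting \(\lambda_i\), where the field hypothesis is used) forces \(1_{V_i} = \lambda_i^{-1}(\lambda_i 1_{V_i}) \in \shf{F}(V_i)\), hence \(V_i \leq U\); so \(s\) is supported in \(V \wedge U\) and lies in \(k_U(V)\).

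Uniqueness is immediate: the same recipe applied to \(k_{U'}\) viewed as a subsheaf of \(k_X\) recovers \(U'\), since \(V \leq U'\) if and only if \(1_V \in k_{U'}(V)\).

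The main technical obstacle is justifying the ``locally of the form \(\lambda_i \cdot 1_{V_i}\)'' description of sections of \(k_X\) on a general locale, since their precise shape depends on the frame structure and requires unwinding the sheafification of the constant presheaf. Once this local description is in hand, the rest of the argument is a direct sheaf-theoretic gluing, and the field hypothesis enters only through the invertibility of nonzero scalars.
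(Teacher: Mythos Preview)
Your argument is correct. The ``locally of the form \(\lambda_i\cdot 1_{V_i}\)'' description you flag as an obstacle is indeed the standard property of sheafification on a locale (sections of the sheafified presheaf are locally sections of the presheaf), so nothing is missing there.

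The paper takes a different, more categorical route. It defines \(U\) by pulling back \(\shf{F}\hookrightarrow k_X\) along \(1\colon{*}\to k_X\) in \(\Shv(X;\Cat{Set})\); this is the same open you construct, since maps from the subterminal \({*}_V\) into that pullback are exactly sections \(1_V\in\shf{F}(V)\). The paper then obtains \(k_U\to\shf{F}\) by adjunction and checks it is an isomorphism by base-changing along each \(i\colon{*}\to k_X\) for \(i\in k\): the case \(i=1\) holds by construction, the cases \(i\in k^{\times}\) follow by \(k^{\times}\)-equivariance, the case \(i=0\) is immediate, and these suffice because \(\coprod_{i\in k}{*}\simeq k_X\) in \(\Shv(X;\Cat{Set})\). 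This is morally the same argument as yours---your scalar inversion step is exactly the \(k^{\times}\)-equivariance---but the paper's packaging avoids your technical obstacle altogether: the coproduct decomposition replaces any explicit unwinding of the sheafified constant presheaf. Your version is more elementary and section-based; the paper's is slicker but leans on the topos-theoretic language.
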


\begin{proof}
  Let \(\shf{F}\hookrightarrow k_X\) be an arbitrary subobject.
  In the category
  of set-valued sheaves \(\Shv(X;\Cat{Set})\),
  we base change this morphism
  along \(1\colon{*}\to k_X\)
  to obtain a monomorphism to~\(*\).
  It is written as
  \({*}_U\hookrightarrow{*}\)
  for a unique open~\(U\).
  Hence we have a map \({*}_U\to\shf{F}\)
  and
  also get a map \(k_U\to\shf{F}\) by adjunction.
  Therefore,
  we have a morphism of subobjects
  \(k_U\hookrightarrow\shf{F}\) of~\(k_X\)
  that is an isomorphism
  after base change along \(1\colon{*}\to k_X\).
  By \(k^{\times}\)-equivariance,
  this is an isomorphism
  after base change along \(i\colon{*}\to k_X\)
  for \(i\in k^{\times}\).
  This is also an isomorphism
  after base change along \(0\colon{*}\to k_X\)
  as \(k_U\times_{k_X}{*}={*}\).
  Since the map \(\coprod_{i\in k}{*}\to k_X\)
  is an isomorphism,
  \(k_U\hookrightarrow\shf{F}\) is an isomorphism.
  The uniqueness is clear.
\end{proof}

\begin{lemma}\label{b5c46f15e8}
  In the situation of \cref{b_la},
  an epimorphism
  from \(k_X\)
  in \(\Shv(X;\D(k)^{\heartsuit})\)
  is isomorphic to
  the projection
  \(k_X\simeq k_U\oplus k_{X\setminus U}\to k_U\)
  for a unique open~\(U\).
\end{lemma}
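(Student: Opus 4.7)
The plan is to work in the abelian category \(\Shv(X;\D(k)^{\heartsuit})\) and analyze \(\phi\colon k_X\twoheadrightarrow \shf{F}\) via its kernel. Since epimorphisms in an abelian category are precisely cokernel maps, it suffices to identify \(K=\ker\phi\) and compute \(k_X/K\).

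First I would apply \cref{c69ee4a6e9} to the subobject \(K\hookrightarrow k_X\): it must be of the form \(k_V\hookrightarrow k_X\) for a unique open~\(V\). This reduces the problem to showing that \(k_X/k_V\simeq k_U\), where \(U\) is the (unique) complement of~\(V\), which exists precisely because \(X\) is Boolean. The uniqueness statement of the lemma is then automatic: \(V\) is determined as the kernel, and \(U\) is determined as its Boolean complement.

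Next I would establish the direct sum decomposition \(k_X\simeq k_U\oplus k_V\). Because \(X=U\amalg V\) is a disjoint join, the disjoint join sheaf condition (\cref{shv_kani}, \cref{shv_b}) gives that every sheaf \(\shf{G}\) satisfies \(\shf{G}(W)\simeq\shf{G}(W\wedge U)\times\shf{G}(W\wedge V)\) naturally in the open~\(W\); applying this to \(\shf{G}=k_X\) shows that \(k_X\) splits as the product (equivalently, in this abelian setting, the direct sum) of its restrictions \(j_{U,*}j_U^*k_X\) and \(j_{V,*}j_V^*k_X\), which since \(U\) and \(V\) are clopen agree with \(k_U\) and \(k_V\) respectively (pushforward along a clopen embedding equals extension by zero). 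Under this decomposition, the canonical monomorphism \(k_V\hookrightarrow k_X\) is the inclusion of the second summand, so the quotient is \(k_U\) and \(\phi\) is identified with the projection, as required.

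The main technical point to verify carefully is the identification \(k_X\simeq k_U\oplus k_V\) and, in particular, the compatibility of the two maps \(k_V\hookrightarrow k_X\) (namely the subobject coming from the kernel of \(\phi\), and the summand inclusion coming from the decomposition). Both are characterized as the unique extension of the identity map on \(k\) viewed over~\(V\), so they coincide; checking this cleanly, perhaps by restricting to \(U\) and to \(V\) and using the disjoint join sheaf condition there, is where one must be a bit careful. Once this is done, taking cokernels completes the proof.
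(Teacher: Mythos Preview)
Your proof is correct and follows exactly the paper's approach: take the kernel, identify it as \(k_{V}\) via \cref{c69ee4a6e9}, and observe that the quotient is \(k_{U}\) where \(U=X\setminus V\). The paper compresses this to a single sentence, leaving the decomposition \(k_{X}\simeq k_{U}\oplus k_{X\setminus U}\) (and the compatibility check you flag) implicit.
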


\begin{proof}
  This follows from
  \cref{c69ee4a6e9}
  by taking the kernel of a given epimorphism.
\end{proof}

\begin{lemma}\label{0a9344a400}
  In the situation of \cref{b_la},
  let
  \(\shf{F}\to\shf{F}'\)
  be a morphism in \(\Shv(X;\D(k)^{\heartsuit})\).
  It is an epimorphism
  if and only if
  the map \(\shf{F}(X)\to\shf{F}'(X)\)
  is surjective.
\end{lemma}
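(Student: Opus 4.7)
The plan is to leverage the direct-sum decomposition along disjoint joins provided by \cref{shv_b} in both directions.

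For the ``if'' direction, I would exploit that every open \(V\subseteq X\) has a complement \(V'=X\setminus V\) with \(X=V\vee V'\) a disjoint join. Since \(\Shv(X;\D(k)^{\heartsuit})\) is abelian and \cref{shv_b} turns finite disjoint joins into products (equivalently direct sums in the abelian setting), both \(\shf{F}\) and \(\shf{F}'\) satisfy a natural decomposition \(\shf{F}(X)\simeq\shf{F}(V)\oplus\shf{F}(V')\) compatible with the map \(\shf{F}\to\shf{F}'\). Given \(t\in\shf{F}'(V)\), I would extend by zero to \((t,0)\in\shf{F}'(X)\), apply the hypothesis to lift it to some \((a,b)\in\shf{F}(V)\oplus\shf{F}(V')=\shf{F}(X)\), and observe that \(a\in\shf{F}(V)\) is then a preimage of \(t\). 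Thus the map is sectionwise surjective on every open, a fortiori an epimorphism.

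For the ``only if'' direction, given an epimorphism and a global section \(s\in\shf{F}'(X)\), local lifting supplies an open cover \(X=\bigvee_{i}U_{i}\) together with sections \(s_{i}\in\shf{F}(U_{i})\) mapping to \(s|_{U_{i}}\). The crucial step is to refine this to a \emph{disjoint} cover. I would apply Zorn's lemma to the poset of families of pairs \((V,i)\) with \(V\leq U_{i}\) and the \(V\)-components pairwise disjoint, ordered by inclusion: a maximal such family \(\{V_{j}\}\) must satisfy \(\bigvee_{j}V_{j}=\top\), since otherwise its complement would be nonzero and, being covered by the \(U_{i}\), would meet some \(U_{i}\) in a nonzero open, violating maximality. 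Setting \(t_{j}=s_{i(j)}|_{V_{j}}\), \cref{shv_b} gives \(\shf{F}(X)\simeq\prod_{j}\shf{F}(V_{j})\), through which the tuple \((t_{j})\) assembles to a global section mapping to \(s\).

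I expect the main obstacle to be this disjoint refinement of a cover: it relies on the Boolean structure in an essential way (in particular, on complementation of an arbitrary open) and requires a choice-theoretic argument, whereas everything else reduces to direct bookkeeping via \cref{shv_b}. Once the refinement is in hand, both implications are formal.
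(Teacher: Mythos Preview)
Your ``if'' direction is exactly the paper's argument: the map on sections over any open~\(V\) is a direct summand of the map on global sections via the decomposition \(\shf{F}(X)\simeq\shf{F}(V)\oplus\shf{F}(X\setminus V)\), hence surjective.

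Your ``only if'' direction is correct but takes a different route. The paper does not lift sections directly; instead it observes that the claim is equivalent to \(k_X\) being projective, and proves this by passing to set-valued sheaves: given an epimorphism \(\shf{F}\twoheadrightarrow k_X\), it base-changes along \(1\colon{*}\to k_X\) to obtain an effective epimorphism onto~\(*\), invokes \cref{proj} to split it, and then transports the section back via the free--forgetful adjunction. Your argument instead carries out the disjoint-refinement step by hand inside \(\Shv(X;\D(k)^{\heartsuit})\); this is essentially the content of \cref{proj} unpacked in the linear setting, so the two proofs share the same core idea but package it differently. The paper's version isolates a reusable statement (\cref{proj}), while yours is self-contained and avoids the detour through set-valued sheaves. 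One stylistic remark: your Zorn argument is fine, but the disjoint refinement can be obtained more directly by well-ordering the index set and setting \(V_i=U_i\setminus\bigvee_{j<i}U_j\), exactly as in the proof of \cref{shv_b}; this avoids maximality considerations and matches the paper's idiom.
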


\begin{proof}
  We first prove the ``if'' direction.
  Then we can show that it is surjective
  on local sections
  by \cref{shv_b},
  since
  for any open~\(U\),
  the map \(\shf{F}(U)\to\shf{F}'(U)\) is a retract of
  \begin{equation*}
    \shf{F}(X)
    \simeq
    \shf{F}(U)\oplus\shf{F}(X\setminus U)
    \to
    \shf{F}'(U)\oplus\shf{F}'(X\setminus U)
    \simeq
    \shf{F}'(X)
  \end{equation*}

  We prove the ``only if'' direction,
  which means that \(k_X\) is projective.
  Let \(\shf{F}\to k_X\) be an epimorphism in \(\Shv(X;\D(k)^{\heartsuit})\).
  In \(\Shv(X;\Cat{Set})\)
  we take a base change of this map along \(1\colon{*}\to k_X\)
  to get an epimorphism \(\shf{F}'\to{*}\).
  By \cref{proj},
  we have a section \({*}\to\shf{F}'\).
  The map \(k_X\to\shf{F}\) 
  corresponding to
  the composite \({*}\to\shf{F}'\to\shf{F}\)
  by adjunction gives us a desired splitting.
\end{proof}

\begin{proof}[Proof of \cref{b_la}]
  Let \(\shf{F}\) be a sheaf.
  First,
  by recursion,
  we construct
  an open~\(U_s\)
  and a map \(a_s\colon k_{U_s}\to\shf{F}\)
  for each section \(s\in\shf{F}(X)\):
  We fix a well ordering on \(\shf{F}(X)\).
  For each \(s\in\shf{F}(X)\),
  let \(b_s\colon\bigoplus_{s'<s}k_{U_s}\to\shf{F}\)
  be the map obtained by \((a_{s'})_{s'<s}\).
  Then
  the composite
  \(k_X\xrightarrow{s}\shf{F}\to\coker(b_s)\)
  is written as
  the composite
  \(k_X\simeq
  k_U\oplus k_{X\setminus U}\twoheadrightarrow k_U\hookrightarrow\coker(b_s)\)
  by \cref{b5c46f15e8}.
  We declare \(U_s\) to be this open~\(U\)
  and \(a_s\) to be
  the composite
  \(k_{U}\to k_{U}\oplus k_{X\setminus U}\simeq k_X\xrightarrow{s}\shf{F}\).

We wish to show that
  the morphism
  \(c\colon\bigoplus_{s\in\shf{F}(X)}
  k_{U_s}\to\shf{F}\)
  induced by \((a_s)_{s\in\shf{F}(X)}\)
  is an isomorphism.
  We first see that \(c\) is a monomorphism
  by induction:
  For each~\(s\),
  we show that
  the map
  \(\bigoplus_{s'\leq s}k_{U_{s'}}\to\shf{F}\)
  induced by~\(a_s\) and~\(b_s\)
  is a monomorphism.
  By the inductive hypothesis,
  \(b_s\) is a monomorphism.
  Since
  \(k_U\xrightarrow{a_s}\shf{F}\to\coker(b_s)\)
  is a monomorphism,
  so is the map induced by~\(a_s\) and~\(b_s\).
  Then we show that
  \(c\) is an epimorphism.
  By \cref{0a9344a400},
  we need to see that
  for \(s\in\shf{F}(X)\),
  the corresponding morphism
  \(k_X\to\shf{F}\)
  factors through~\(c\).
  Since \(b_s\) is a monomorphism,
  we obtain a map
  \begin{equation*}
    k_{X\setminus U_s}
    \simeq
    \ker(k_X\to k_{U_s})
    \to
    \ker(\shf{F}\to\coker(b_s))
    \simeq
    \bigoplus_{s'<s}k_{U_{s'}}.
  \end{equation*}
  Then the coproduct of~\(a_s\) and this map
  gives a desired factorization.
\end{proof}

\begin{corollary}\label{b_hdim}
  For a Boolean locale~\(X\)
  and a field~\(k\),
  the Grothendieck abelian category
  \(\Shv(X;\D(k)^{\heartsuit})\)
  has homological dimension \(\leq0\);
  i.e., every object is injective and projective.
\end{corollary}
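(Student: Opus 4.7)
The plan is to leverage the structure theorem \cref{b_la} to reduce the question to the behavior of the building blocks~\(k_U\). In a Grothendieck abelian category, having homological dimension \(\leq 0\) amounts to \(\operatorname{Ext}^{1}\equiv 0\), which is in turn equivalent to every object being projective (and, by the same observation, to every object being injective, since every short exact sequence then splits). So it suffices to prove that every object of \(\Shv(X;\D(k)^{\heartsuit})\) is projective.

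By \cref{b_la}, every object decomposes as a direct sum of sheaves of the form~\(k_U\). Since arbitrary direct sums of projectives in any abelian category with coproducts are projective (because \(\operatorname{Hom}(\bigoplus_{i}P_{i},\X)\simeq\prod_{i}\operatorname{Hom}(P_{i},\X)\) is a product of exact functors, and the product of exact functors into abelian groups is exact), the problem reduces to showing that each~\(k_U\) is projective.

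Here the Boolean hypothesis enters decisively: every open~\(U\) admits a complement~\(U'\) with \(U\vee U'=X\) and \(U\wedge U'=\bot\), so by \cref{b5c46f15e8} we have \(k_X\simeq k_U\oplus k_{U'}\). The projectivity of~\(k_X\) was already established in the ``only if'' direction of the proof of \cref{0a9344a400}. Since direct summands of projectives are projective,~\(k_U\) is projective as well, finishing the argument.

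I do not anticipate any serious obstacle: the decomposition in \cref{b_la} and the projectivity of~\(k_X\) from \cref{0a9344a400} supply the essential content, and the Boolean-specific splitting \(k_X\simeq k_U\oplus k_{U'}\) is precisely the additional ingredient needed to transfer projectivity from~\(k_X\) to every~\(k_U\).
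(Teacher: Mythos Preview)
Your proposal is correct and follows essentially the same route as the paper: reduce to projectivity, use \cref{b_la} to reduce to the generators~\(k_U\), and observe that each~\(k_U\) is a direct summand of~\(k_X\), which is projective by \cref{0a9344a400}. The only difference is that you spell out a few routine facts (direct sums and summands of projectives are projective) that the paper leaves implicit.
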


\begin{proof}
  It suffices to show that every object is projective.
  By \cref{b_la},
  it suffices to show that
  \(k_U\) for any open~\(U\) is projective.
  Since this is a direct summand of~\(k_X\),
  it is projective by \cref{0a9344a400}.
\end{proof}

\begin{proof}[Proof of \cref{b_hla}]
By \cref{post},
  the \(\infty\)-category \(\Shv(X;\D(k))\)
  is equivalent to the derived \(\infty\)-category of
  its heart.
  Therefore, by \cref{b_hdim},
  any object~\(\shf{F}\) is equivalent to
  \(\bigoplus_n\Sigma^n\pi_n\shf{F}\)
  and
  hence
  \cref{b_la} gives the desired result.
\end{proof}

We also record the following:

\begin{corollary}\label{b_wdim}
  For a Boolean locale~\(X\)
  and a field~\(k\),
  the heart
  \(\Shv(X;\D(k))^{\heartsuit}
  \subset\Shv(X;\D(k))\) is closed under
  tensor product operations.
\end{corollary}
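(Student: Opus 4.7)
The plan is to reduce to computing \(k_U\otimes k_V\) and to use the disjoint-cover splitting provided by Boolean-ness. By \cref{b_la}, every object of \(\Shv(X;\D(k))^{\heartsuit}\) is (noncanonically) a direct sum of sheaves of the form~\(k_U\) for~\(U\) an open of~\(X\). Since \(\Shv(X;\D(k))\) is presentably symmetric monoidal, its tensor product distributes over direct sums in each variable; the heart is itself closed under small direct sums, since the \(t\)-structure is compatible with filtered colimits, so any small direct sum can be expressed as a filtered colimit of finite direct sums, both of which preserve the heart. Hence the corollary reduces to showing \(k_U\otimes k_V\in\Shv(X;\D(k))^{\heartsuit}\) for any two opens~\(U,V\).

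To verify this, observe that in a Boolean locale any open~\(U\) has a complement~\(U^c\), so \(X = U\vee U^c\) is a disjoint join. By \cref{shv_b} this induces an equivalence \(\Shv(X;\D(k))\simeq\Shv(U;\D(k))\times\Shv(U^c;\D(k))\) under which the constant sheaf~\(k_X\) corresponds to the pair \((k,k)\); consequently the monoidal unit of \(\Shv(X;\D(k))\) splits as \(k_X\simeq k_U\oplus k_{U^c}\). Tensoring with~\(k_V\) then gives \(k_V\simeq(k_U\otimes k_V)\oplus(k_{U^c}\otimes k_V)\), so \(k_U\otimes k_V\) appears as a direct summand of the heart object~\(k_V\). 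By \cref{xw148x}, truncation is pointwise, so homotopy groups preserve direct summands; therefore any summand of a heart object is itself in the heart, and we are done.

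There is no substantial obstacle; the only step that is not purely formal is the splitting \(k_X\simeq k_U\oplus k_{U^c}\), and this is precisely what the sheaf condition of \cref{shv_b} yields for the disjoint cover \(X = U\vee U^c\). In other words, Boolean-ness makes every open into an idempotent retract of the unit, which forces closure of the heart under tensor products at the level of generators.
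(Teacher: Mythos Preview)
Your proof is correct and follows essentially the same approach as the paper: both reduce via \cref{b_la} and distributivity to showing that \(k_U\otimes k_V\) lies in the heart, and both exploit that \(k_U\) is a direct summand of the unit \(k_X\) (the paper phrases this as reducing to ``tensor products of finite copies of the unit'', while you stop one step earlier at a summand of~\(k_V\)). The appeal to \cref{xw148x} in your last step is unnecessary, since retracts of heart objects lie in the heart for any \(t\)-structure, but this does not affect correctness.
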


\begin{proof}
  By \cref{b_la},
  it suffices to consider
  the tensor product of finite copies of the unit,
  which is again the unit.
\end{proof}

\section{The smashing spectrum of \texorpdfstring{\(\D(k)\)}{D(k)}-valued sheaves}\label{s:ex}

We first review facts we need from higher topos theory
in \cref{ss:barr}.
We then prove \cref{main,tloc_k} in \cref{ss:u_k,ss:tloc},
respectively.

\subsection{The existence of a Boolean cover}\label{ss:barr}

We recall the following from~\cite[Corollary~A.4.3.2]{LurieSAG}:

\begin{theorem}[Lurie]\label{barr}
  For any hypercomplete \(\infty\)-topos \(\cat{X}\),
  there is a Boolean locale~\(X\)
  and
  a geometric morphism
  \begin{equation*}
    \Shv(X)\to\cat{X}
  \end{equation*}
  such that its inverse image functor
  is conservative.
\end{theorem}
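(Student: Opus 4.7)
The plan is to reduce to the analogous statement for Grothendieck $1$-topoi (the classical theorem of Barr) and then promote the resulting $1$-geometric morphism to the $\infty$-categorical level using hypercompleteness.

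First, I would apply Barr's classical theorem to the $1$-topos $\cat{X}^{\heartsuit}$ of discrete objects of $\cat{X}$. This produces a complete Boolean algebra, i.e., a Boolean locale $X$, together with a geometric morphism of $1$-topoi $\Shv(X)^{\heartsuit}\to\cat{X}^{\heartsuit}$ whose inverse image is conservative. The classical proof of Barr's theorem, which is the deepest ingredient and which I would take as a black box, proceeds by an iterated Booleanization of slice topoi (or equivalently by a transfinite construction well-ordering a generating family of subterminal objects and repeatedly taking complements).

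Second, I would lift this to an $\infty$-geometric morphism $f\colon\Shv(X)\to\cat{X}$. Both sides are hypercomplete: $\cat{X}$ by hypothesis, and $\Shv(X)$ by \cref{post}. Any hypercomplete $\infty$-topos is recovered as the hypercompletion of $\infty$-sheaves on its $1$-topos of discrete objects equipped with the canonical topology, so a continuous left-exact functor $\cat{X}^{\heartsuit}\to\Shv(X)^{\heartsuit}$ extends by left Kan extension and sheafification to an inverse image functor between hypercomplete $\infty$-topoi, whose restriction to $0$-truncated objects recovers the $1$-morphism from the previous step.

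Third, I would verify conservativity of $f^{*}$. Any inverse image commutes with Postnikov truncations and with the formation of homotopy sheaves~$\pi_{n}$. For a morphism $u$ in $\cat{X}$, hypercompleteness of $\cat{X}$ gives that $u$ is an equivalence if and only if each $\pi_{n}(u)$ is. If $f^{*}u$ is an equivalence in $\Shv(X)$, then for every $n$ the morphism $f^{*}\pi_{n}(u)\simeq\pi_{n}(f^{*}u)$ is an equivalence of discrete sheaves, and since $f^{*}$ is conservative on discrete objects by the first step, $\pi_{n}(u)$ is an equivalence; hence so is $u$.

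The main obstacle is the lifting step: passing from a $1$-geometric morphism to an $\infty$-geometric morphism between hypercomplete $\infty$-topoi requires a universal property of hypercompletion identifying a hypercomplete $\infty$-topos with its underlying $1$-topos equipped with the canonical topology. Once this is in hand, both the construction of $f$ and the conservativity of $f^{*}$ are essentially formal consequences of the classical Barr theorem.
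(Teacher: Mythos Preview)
The paper does not prove this theorem: it is quoted from Lurie's~\cite[Corollary~A.4.3.2]{LurieSAG}, and the surrounding \cref{a2afe375f6} explicitly records that the \(1\)-localic case is Barr's classical result while the general statement is Lurie's. So there is no proof in the paper to compare against; I can only assess your argument on its own merits.

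Your second step contains a genuine error. The assertion that ``any hypercomplete \(\infty\)-topos is recovered as the hypercompletion of \(\infty\)-sheaves on its \(1\)-topos of discrete objects'' is false: it amounts to claiming that every hypercomplete \(\infty\)-topos is \(1\)-localic. A concrete counterexample is \(\cat{X}=\Cat{S}_{/K(\ZZ,2)}\). This is hypercomplete (slices of \(\Cat{S}\) are hypercomplete), and since \(K(\ZZ,2)\) is simply connected its \(0\)-truncated objects are just \(\Cat{Set}\); but \(\cat{X}\) is not equivalent to \(\Cat{S}\), so it is not \(1\)-localic. Concretely, the discrete objects of \(\cat{X}\) are the constant families \(S\times K(\ZZ,2)\to K(\ZZ,2)\), and any colimit of such is again constant, so the point \({*}\to K(\ZZ,2)\) is not a colimit of discrete objects. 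Hence a left-exact colimit-preserving functor \(\cat{X}^{\heartsuit}\to\Shv(X)^{\heartsuit}\) does not determine one on all of \(\cat{X}\) by left Kan extension, and your lifting procedure breaks down. What your argument actually produces is a geometric morphism \(\Shv(X)\to\cat{X}_{1}\) to the \(1\)-localic reflection of \(\cat{X}\), together with the canonical \(\cat{X}\to\cat{X}_{1}\); these do not compose to give \(\Shv(X)\to\cat{X}\).

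In short, your outline is a correct proof in the \(1\)-localic case (this is exactly what \cref{a2afe375f6} attributes to Barr), but the passage to arbitrary hypercomplete \(\infty\)-topoi requires a genuinely new argument at the \(\infty\)-categorical level, which is what Lurie supplies in~\cite{LurieSAG}.
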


\begin{remark}\label{a2afe375f6}
  In the situation of \cref{barr},
  when \(\cat{X}\)
  is the hypercompletion of a \(0\)-localic \(\infty\)-topos,
  the statement is simple:
  It says that any frame has a frame injection to
  a Boolean frame.
  This theorem was proven by Funayama~\cite{Funayama59}.
  His proof is quite involved because of its generality
  and a simpler proof in this case can be found,
  e.g.,
  in~\cite[Corollary~II.2.6]{Johnstone82}.
When \(\cat{X}\)
  is the hypercompletion of a \(1\)-localic \(\infty\)-topos,
  the statement was proven by Barr~\cite{Barr74}.
\end{remark}

This theorem is useful to us through the following
observation:

\begin{lemma}\label{08765b6080}
  Let \(\cat{Y}\to\cat{X}\) be a map
  of \(\infty\)-toposes
  whose inverse image functor is conservative.
  Then the inverse image functor
  \(\Shv(\cat{X};\cat{C})\to\Shv(\cat{Y};\cat{C})\)
  for a compactly generated \(\infty\)-category~\(\cat{C}\)
  is conservative as well.
\end{lemma}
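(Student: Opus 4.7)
My plan is to reduce the conservativity question to that of $f^{*}\colon\cat{X}\to\cat{Y}$ on the underlying $\infty$-toposes by identifying $\cat{C}$-valued sheaves with finite-limit-preserving functors out of a small subcategory of compact objects. First I would write $\cat{C}\simeq\operatorname{Ind}(\cat{C}_{0})$, where $\cat{C}_{0}\subseteq\cat{C}$ is the small full subcategory of compact objects; by compact generation, $\cat{C}_{0}$ is closed under finite colimits in~$\cat{C}$. A standard computation with the $\Cat{Pr}$-tensor product then identifies
\begin{equation*}
  \Shv(\cat{X};\cat{C})
  \simeq
  \cat{C}\otimes\cat{X}
  \simeq
  \Fun^{\mathrm{lex}}(\cat{C}_{0}^{\op},\cat{X}),
\end{equation*}
the $\infty$-category of finite-limit-preserving functors. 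Concretely, a sheaf~$F$ on the left corresponds to the functor $c\mapsto\operatorname{Map}_{\cat{C}}(c,F(\X))$, where we identify $\Cat{Spc}$-valued sheaves on~$\cat{X}$ with objects of~$\cat{X}$.

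Under this identification, the inverse image $f^{*}\colon\Shv(\cat{X};\cat{C})\to\Shv(\cat{Y};\cat{C})$ corresponds to postcomposition with $f^{*}\colon\cat{X}\to\cat{Y}$. Postcomposition preserves finite-limit-preserving functors because the inverse image of any geometric morphism is left exact. To conclude, observe that equivalences in $\Fun^{\mathrm{lex}}(\cat{C}_{0}^{\op},\cat{X})$ are detected pointwise at each $c\in\cat{C}_{0}$, since this is true in any functor category. Since $f^{*}$ on~$\cat{X}$ is conservative by hypothesis, this pointwise criterion transfers through postcomposition, giving the desired conservativity.

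The main ingredient that requires verification is the identification $\cat{C}\otimes\cat{X}\simeq\Fun^{\mathrm{lex}}(\cat{C}_{0}^{\op},\cat{X})$ together with the matching of $f^{*}$ with postcomposition; this is a routine manipulation built on the universal property of~$\operatorname{Ind}$ and the behavior of the $\Cat{Pr}$-tensor product, but it is the step that uses the hypothesis of compact generation.
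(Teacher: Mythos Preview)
Your proof is correct and follows exactly the paper's approach: identify $\Shv(\cat{X};\cat{C})$ with $\Fun^{\mathrm{lex}}(\cat{C}_{0}^{\op},\cat{X})$, observe that the inverse image becomes postcomposition with $f^{*}$, and conclude by pointwise detection of equivalences. The paper's version is terser (it states only the identification and leaves the rest implicit), but the argument is the same.
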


\begin{proof}
  Let \(\cat{C}_{0}\) denote the full subcategory
  of compact objects.
  The desired result follows
  from the identification
  of
  \(\Shv(\X;\cat{C})\) with
  the \(\infty\)-category
  of the functors \(\cat{C}_{0}^{\op}\to\X\)
  preserving finite limits.
\end{proof}

\subsection{The smashing spectrum of \texorpdfstring{\(\D(k)\)}{D(k)}-valued sheaves}\label{ss:u_k}

In this section,
we prove \cref{main} using \cref{barr},
whose usefulness
can be seen from the following:

\begin{lemma}\label{x91r9p}
  Let \(\cat{C}\to\cat{D}\) be
  a colimit-preserving symmetric monoidal functor
  between presentably symmetric monoidal \(\infty\)-categories\footnote{Note that here we state this in the unstable setting;
    see~\cite{ttg-sm} for the definition of~\(\Sm\).
  }.
  If it is conservative,
  the induced map \(\Sm(\cat{D})\to\Sm(\cat{C})\)
  of locales is an epimorphism.
\end{lemma}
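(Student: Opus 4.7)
The plan is to transport the epimorphism condition on locales into injectivity of the associated frame map, and then verify the latter directly using conservativity.

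A map in $\Cat{Loc}$ is an epimorphism if and only if the corresponding frame homomorphism is injective on underlying sets, so it suffices to show that the frame map $\Sm(\cat{C})\to\Sm(\cat{D})$ induced by $F$ is injective. Via the identifications of~\cite{ttg-sm}, elements of $\Sm(\cat{C})$ correspond to equivalence classes of idempotent commutative algebras $u\colon\unit_{\cat{C}}\to A$, and this frame map sends the class of $(A,u)$ to the class of $(F(A),F(u))$ in $\Sm(\cat{D})$ (which is idempotent in $\cat{D}$ because $F$ is colimit-preserving and symmetric monoidal and hence carries $\unit_{\cat{C}}$ to $\unit_{\cat{D}}$). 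It therefore suffices to show that if $A_1,A_2$ are idempotent algebras in $\cat{C}$ with $F(A_1)\simeq F(A_2)$ as idempotent algebras in $\cat{D}$, then $A_1\simeq A_2$ as idempotent algebras in $\cat{C}$.

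The crucial input is the standard fact that for any two equivalent idempotent algebras $B_1\simeq B_2$ in a symmetric monoidal $\infty$-category, the canonical maps $j_i\colon B_i\to B_1\otimes B_2$ induced by the opposite unit are equivalences; this reduces via the equivalence to the case $B_1=B_2$, where it follows immediately from idempotency of the multiplication. Applied to $F(A_1)\simeq F(A_2)$ in $\cat{D}$, this shows that $F(j_i)\colon F(A_i)\to F(A_1\otimes A_2)$ are equivalences. By conservativity of $F$, the maps $j_i\colon A_i\to A_1\otimes A_2$ are themselves equivalences in $\cat{C}$. The composite $j_2^{-1}\circ j_1\colon A_1\to A_2$ is then an equivalence compatible with units (both $j_1\circ u_1$ and $j_2\circ u_2$ equal $u_1\otimes u_2$ under $\unit_{\cat{C}}\simeq\unit_{\cat{C}}\otimes\unit_{\cat{C}}$), yielding the desired equivalence of idempotent algebras.

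The main obstacle is really only bookkeeping: making precise, in the unstable setting, the identification of $\Sm$ with the poset of smashing idempotents and the functoriality of this identification along symmetric monoidal colimit-preserving functors. Once this is imported from~\cite{ttg-sm}, the conservativity argument itself is a short diagram chase.
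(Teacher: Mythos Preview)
Your proof is correct and follows essentially the same route as the paper. The paper phrases the argument at the frame level---noting that the induced frame map is conservative (as a functor of posets) and then using \(U\wedge U'\) to conclude \(U=U\wedge U'=U'\)---while you unpack this in terms of idempotent algebras and the maps \(A_i\to A_1\otimes A_2\); these are the same argument, since \(A_1\otimes A_2\) represents the meet (or join, depending on convention) and conservativity of the frame map is exactly conservativity of the original functor restricted to morphisms between idempotents.
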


\begin{proof}
  It induces a conservative frame map
  \(F\to G\),
  where \(F\) and~\(G\)
  are \(\Sm(\cat{C})\) and \(\Sm(\cat{D})\),
  respectively.
  We wish to show that it is injective.
  Suppose that \(U\) and \(U'\in F\) are mapped to the same open.
  In that case,
  \(U\wedge U'\) is also mapped to the same open,
  but by conservativity,
  we get \(U=U\wedge U'=U'\).
\end{proof}

We do not directly apply \cref{x91r9p}
in our proof of \cref{main}.
Instead, we use the following observations:

\begin{lemma}\label{25186dcf09}
  Let \(\cat{X}\) be an \(\infty\)-topos and \(k\) a field.
  For
  any \(\E_{1}\)-algebra~\(A\)
  in \(\Shv(\cat{X};\D(k))\),
  there is a (unique) subterminal object~\(U\)
  such that
  \(A\rvert_{\cat{X}_{/U}}\) is zero
  and
  the unit
  \(k\rvert_{\cat{X}_{\setminus U}}\to\pi_0A\rvert_{\cat{X}_{\setminus U}}\)
  is a monomorphism
  in \(\Shv(\cat{X}_{\setminus U};\D(k)^{\heartsuit})\),
  where \(\cat{X}_{\setminus U}\)
  denotes the closed subtopos
  complementary to~\(\cat{X}_{/U}\).
\end{lemma}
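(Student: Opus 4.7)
The plan is to take~\(U\) to be the join of all subterminals \(V\) of~\(\cat{X}\) on which~\(A\) vanishes, and to show that its maximality directly forces the monomorphism condition. The collection in question is closed under arbitrary joins: if \(V=\bigvee_{i}V_{i}\), the family of inverse image functors \(\cat{X}_{/V}\to\cat{X}_{/V_{i}}\) is jointly conservative, so \(A\rvert_{\cat{X}_{/V_{i}}}=0\) for all~\(i\) implies \(A\rvert_{\cat{X}_{/V}}=0\). Hence~\(U\) itself lies in the collection, i.e., \(A\rvert_{\cat{X}_{/U}}=0\).

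Let \(K\) denote the kernel of the unit \(k\to\pi_{0}A\) inside \(\Shv(\cat{X}_{\setminus U};\D(k)^{\heartsuit})\). The proof of \cref{c69ee4a6e9} goes through verbatim with an arbitrary \(\infty\)-topos in place of a locale, because its only inputs are that subobjects of the terminal object are subterminal objects and that~\(k\) decomposes as \(\coprod_{i\in k}*\). Hence \(K\simeq k_{W}\) for a unique subterminal~\(W\) of~\(\cat{X}_{\setminus U}\). On~\(W\) the unit map is zero, so \(1=0\) in the ring~\(\pi_{0}A\rvert_{W}\), giving \(\pi_{0}A\rvert_{W}=0\). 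Since every~\(\pi_{n}A\) is a module over~\(\pi_{0}A\) via the \(\E_{1}\)-algebra structure on~\(A\), this forces \(A\rvert_{W}=0\).

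To transport the vanishing back to~\(\cat{X}\), let \(V\geq U\) be the subterminal of~\(\cat{X}\) that represents~\(W\) under the quotient map from the frame of subterminals of~\(\cat{X}\) to that of~\(\cat{X}_{\setminus U}\). The \(\infty\)-topos \(\cat{X}_{/V}\) is glued from its open sub-\(\infty\)-topos \(\cat{X}_{/U}\) and the complementary closed sub-\(\infty\)-topos, which is identified with \((\cat{X}_{\setminus U})_{/W}\); the corresponding pair of inverse image functors is jointly conservative. Since \(A\rvert_{\cat{X}_{/U}}=0\) by construction of~\(U\) and \(A\rvert_{W}=0\) by the previous paragraph, we obtain \(A\rvert_{\cat{X}_{/V}}=0\). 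Maximality of~\(U\) now gives \(V=U\), so \(W=\emptyset\), \(K=0\), and the unit is a monomorphism.

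For uniqueness, suppose~\(U'\) is another such subterminal. Maximality of~\(U\) gives \(U'\leq U\); conversely, in \(\cat{X}_{\setminus U'}\) the image~\(W'\) of~\(U\) satisfies \(A\rvert_{W'}=0\), so \(\pi_{0}A\rvert_{W'}=0\), and the monomorphism hypothesis at~\(U'\) forces \(k\rvert_{W'}=0\) and hence \(W'=\emptyset\), i.e., \(U\leq U'\). The main obstacle is the frame-theoretic bookkeeping relating subterminals of~\(\cat{X}\), \(\cat{X}_{/U}\), and~\(\cat{X}_{\setminus U}\) together with the open/closed gluing used in the third paragraph; these are standard but demand care. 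The algebraic core---that a nontrivial kernel of the unit locally trivializes the whole \(\E_{1}\)-algebra---is short and direct.
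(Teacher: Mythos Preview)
Your construction of~\(U\) differs from the paper's. Rather than taking the maximal vanishing locus and then invoking \cref{c69ee4a6e9} on the complement, the paper sets \(U\) equal to the pullback of \(1,0\colon{*}\rightrightarrows\pi_{0}A\) in \(\Shv(\cat{X};\Cat{Set})\)---the locus where the unit section equals the zero section---and then reduces by functoriality to the two extreme cases \(U={*}\) and \(U=\emptyset\). In the second case it does not compute the kernel; instead it uses the affine automorphism \((i-(\X))(i-i')^{-1}\) of~\(\pi_{0}A\) to transport the single inequality \(1\neq 0\) to \(i\neq i'\) for every pair of distinct field elements, and then reads off the monomorphism from the decomposition \(k\simeq\coprod_{i\in k}{*}\). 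This route is shorter and sidesteps the open--closed bookkeeping you rightly flag as delicate; your maximality argument is nonetheless a reasonable alternative, and your claim that \cref{c69ee4a6e9} holds in any \(\infty\)-topos is correct.

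There is, however, a genuine gap in your deduction of \(A\rvert_{W}=0\). From \(\pi_{0}A\rvert_{W}=0\) you correctly obtain \(\pi_{n}A\rvert_{W}=0\) for every~\(n\) via the graded module structure, but the inference ``this forces \(A\rvert_{W}=0\)'' amounts to asserting that \(\infty\)-connective objects of \(\Shv\bigl((\cat{X}_{\setminus U})_{/W};\D(k)\bigr)\) vanish, i.e., that the standard \(t\)-structure is left-separated. That is precisely hypercompleteness of the ambient \(\infty\)-topos, which is \emph{not} assumed here; indeed the paper goes to some length (via \cref{barr} and \cref{08765b6080}) to handle the failure of hypercompleteness elsewhere. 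The paper's own proof is admittedly terse at the parallel step (``implies \(A=0\) as \(A\) is an \(\E_{1}\)-algebra''), so you are in good company, but your stated reasoning through the \(\pi_{n}\)'s does not close the argument as written; one needs to exploit the unit axiom \(\mu\circ(e\otimes\id_{A})=\id_{A}\) more directly rather than only its shadow on homotopy sheaves.
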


\begin{proof}
In \(\Shv(\cat{X};\Cat{Set})\),
  we base change the monomorphism \(1\colon{*}\to\pi_0A\)
  along \(0\colon{*}\to\pi_0A\)
  to get a subterminal object,
  which corresponds to an open~\(U\).
  We wish to show that
  \(U\) satisfies the requirement.
  By the functoriality of this construction,
  it suffices to consider the cases
  \(U={*}\) and \(U=\emptyset\).

  We first assume \(U={*}\).
  This means that
  \({*}\xrightarrow{1}k\to\pi_0A\)
  is zero,
  which implies \(A=0\)
  as \(A\) is an \(\E_1\)-algebra.

  We then assume \(U=\emptyset\)
  and show that
  \(k\to\pi_0A\) is a monomorphism.
  By considering the decomposition
  \(\coprod_{i\in k}{*}\simeq k\),
  it suffices to show that
  the limit of \({*}\xrightarrow{i}\pi_0A\xleftarrow{i'}{*}\)
  is initial
  when \(i\) and~\(i'\) are distinct elements of~\(k\).
  The assumption
  is the case \((i,i')=(0,1)\).
  The other cases are also reduced to this case
  by considering the automorphism of~\(\pi_0A
  \in\Shv(\cat{X};\D(k)^{\heartsuit})\)
  given by \((i-(\X))(i-i')^{-1}\).
\end{proof}

\begin{lemma}\label{x4dpup}
  For a field~\(k\)
  and an \(\infty\)-topos~\(\cat{X}\),
  suppose that
  \(e\colon k\to E\) is an idempotent object
  in \(\Shv(\cat{X};\D(k))\).
  If \(\pi_0e\colon k\to\pi_0E\) is a monomorphism
  in \(\Shv(\cat{X};\D(k)^{\heartsuit})\),
  \(e\) is an equivalence.
\end{lemma}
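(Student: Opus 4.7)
The plan is to reduce to the case of a Boolean locale via \cref{barr} applied to \(\cat{X}^{\hyp}\), settle that case using the structure theory from \cref{s:b_loc}, and then exploit the idempotence of \(e\) to lift the resulting equivalence from \(E^{\hyp}\) back to \(E\) in \(\Shv(\cat{X};\D(k))\).

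In the Boolean case \(\cat{X} = \Shv(X)\), I would split the monomorphism \(\pi_0 e\colon k \to \pi_0 E\) in the heart using the homological dimension bound of \cref{b_hdim}, and then post-compose such a splitting with the truncation \(\tau_{\leq 0}\colon E \to \pi_0 E\) to obtain a retraction \(r\colon E \to k\) of \(e\) in \(\Shv(X;\D(k))\). This makes \(e\) a split monomorphism in a stable \(\infty\)-category, so we get a direct sum decomposition \(E \simeq k \oplus F\) with \(F = \cofib(e)\). Idempotence of \(e\) translates into \(E \otimes F \simeq 0\), and distributing the tensor product over the coproduct yields
\[
E \otimes F \simeq k \otimes F \oplus F \otimes F \simeq F \oplus (F \otimes F),
\]
which in a stable \(\infty\)-category forces \(F = 0\).

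For a general \(\cat{X}\), I would apply \cref{barr} to \(\cat{X}^{\hyp}\) to obtain a Boolean locale \(X\) together with a geometric morphism \(f\colon \Shv(X) \to \cat{X}^{\hyp}\) having conservative inverse image; by \cref{08765b6080}, conservativity persists for \(\D(k)\)-valued sheaves. Since hypercompletion is symmetric monoidal and commutes with \(\tau_{\leq 0}\) (the heart consists of hypercomplete objects), the pullback \(f^*(e^{\hyp})\) remains an idempotent with monic \(\pi_0\) in \(\Shv(X;\D(k))\); by the previous paragraph it is an equivalence, and conservativity then gives that \(e^{\hyp}\) itself is an equivalence in \(\Shv(\cat{X}^{\hyp};\D(k))\).

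To conclude, I would observe that \(e^{\hyp}\) being an equivalence produces a retraction \(r\colon E \to E^{\hyp} \xrightarrow{(e^{\hyp})^{-1}} k\) of \(e\): by naturality of the hypercompletion unit together with the fact that \(k\) is hypercomplete, the composite \(k \xrightarrow{e} E \to E^{\hyp}\) equals \(e^{\hyp}\), so \(r \circ e = \id_k\). The resulting splitting \(E \simeq k \oplus F'\) in \(\Shv(\cat{X};\D(k))\), combined with the idempotence-plus-distributivity argument from the Boolean case, forces \(F' = 0\). The main obstacle is precisely the non-conservativity of hypercompletion: the Boolean reduction alone only produces an equivalence after hypercompleting, and it is the idempotence hypothesis on \(e\)—via the splitting trick \(E \simeq k \oplus F\)—that supplies the rigidity needed to transfer this conclusion back to \(\Shv(\cat{X};\D(k))\).
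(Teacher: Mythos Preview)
Your proof is correct and follows the same route as the paper: handle the Boolean case via the splitting coming from \cref{b_hdim}/\cref{b_hla}, pass through \cref{barr} and conservativity to see that \(k\to E^{\hyp}\) is an equivalence, and then use idempotence to promote the resulting retraction of~\(e\) to an equivalence. The only differences are cosmetic---the paper phrases the final step as ``split monomorphism plus epimorphism in \(\CAlg\)'' rather than your \(E\simeq k\oplus F'\) argument---and one notational slip: your ``truncation \(\tau_{\leq 0}\colon E\to\pi_0 E\)'' is not literally a truncation map (no such map exists for nonconnective~\(E\)), but the required projection \(E\to\pi_0 E\) is available in the Boolean case by \cref{b_hla}, so the argument goes through.
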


\begin{proof}
  We first prove this
  under the assumption that \(\cat{X}=\Shv(X)\) holds
  for a Boolean locale~\(X\).
  By \cref{b_hla,b_hdim},
  \(e\) is an inclusion to the direct summand.
  Hence \(k_{X}\) is a retract of~\(E\).
  Since \({\id}_E\otimes e\) is an equivalence,
  so is \({\id}_{k_{X}}\otimes e\simeq e\).

  We consider the general case.
  By \cref{barr},
  there is
  a Boolean locale~\(Y\)
  and
  a geometric morphism \(\Shv(Y)\to\cat{X}^{\hyp}\)
  whose inverse image is conservative.
  We write \(f\) for its composite with \(\cat{X}^{\hyp}\to\cat{X}\).
  Then
  \(f^*e\colon k_Y\to f^*E\)
  is an idempotent object
  and hence an equivalence
  by the argument above.
  Therefore by \cref{08765b6080},
  the composite
  \(k\xrightarrow{e}E\to E^{\hyp}\)
  is an equivalence
  and thus \(e\) is a split monomorphism
  in \(\CAlg(\Shv(\cat{X};\D(k)))\).
  Since \(e\) is also an epimorphism
  in \(\CAlg(\Shv(\cat{X};\D(k)))\),
it must be an equivalence.
\end{proof}

\begin{proof}[Proof of \cref{main}]
  It is clear that
  \(\Sm(\cat{X})\to X\) is an epimorphism of locales.
  It suffices to show that it induces a surjection of frames.
  Let \(E\) be an idempotent algebra
  in \(\Shv(\cat{X};\D(k))\).
  By
  \cref{25186dcf09},
  we have a subterminal object~\(U\)
  such that \(E\rvert_{\cat{X}_{/U}}\) is zero
  and
  \(E\rvert_{\cat{X}_{\setminus U}}\)
  satisfies the assumption of \cref{x4dpup}.
  By \cref{x4dpup},
  as an idempotent algebra,
  \(E\) is equivalent to
  \(\cofib(k_U\to k)\).
\end{proof}

\subsection{Application: Tannaka duality}\label{ss:tloc}

We deduce a refined version of \cref{tloc_k}
from \cref{main}.
First, we explain its namesake:

\begin{remark}\label{xb7jmw}
  Since the work of Lurie~\cite{Lurie05},
  Tannaka duality in algebraic geometry
  means a categorical reconstruction result
  of certain type.
  For example,
  Lurie showed
  in~\cite[Corollary~9.6.0.2]{LurieSAG}
  that the functor
  \begin{equation*}
    \QCoh\colon
    \{\text{quasicompact quasiseparated spectral algebraic spaces}\}^{\op}
    \to
    \CAlg(\Cat{Pr})
  \end{equation*}
  is fully faithful;
  see~\cite[Remark 9.0.0.5]{LurieSAG} for other references.
\end{remark}

\begin{proof}[Proof of \cref{tloc_k}]
  By~\cite{ttg-sm},
  for a presentably symmetric monoidal \(\infty\)-category~\(\cat{V}\),
  we can compose~\(\Sm\) with 
  the functor
  \(\cat{V}\otimes\X
  \colon\CAlg(\Cat{Pr})\to\CAlg_{\cat{V}}(\Cat{Pr})\)
  to obtain the adjunction
  \begin{equation*}
    \begin{tikzcd}[column sep=huge]
      \Cat{Loc}^{\op}\ar[r,shift left,"\Shv(\X;\cat{V})"]&
      \CAlg_{\cat{V}}(\Cat{Pr})\ar[l,shift left,"\Sm"]\rlap.
    \end{tikzcd}
  \end{equation*}
  The desired statement
  is about the case \(\cat{V}=\D(k)\).
  \Cref{main} says that the unit is an equivalence
  so \(\Shv(\X;\D(k))\) is fully faithful.
\end{proof}

Note that
the \(1\)-category
\(\Cat{Loc}\) can be upgraded
to a \((1,2)\)-category,
i.e., a bicategory
whose mapping categories are (essentially) posets.
Our convention is that
\(\Cat{Loc}(Y,X)\) denotes
the full subcategory
of \(\Fun(F,G)\)
spanned by frame morphisms
where \(F\) and~\(G\)
denote the frames of~\(X\) and~\(Y\),
respectively.
We can show that
this is again fully faithful
in the bicategorical sense:

\begin{theorem}\label{tloc_k_2}
  For any field~\(k\) and locales~\(X\) and~\(Y\),
  the functor
  \begin{equation*}
    \Cat{Loc}(Y,X)
    \to
    \Fun_{\D(k)}^{\otimes}
    \bigl(\Shv(X;\D(k)),\Shv(Y;\D(k))\bigr)
  \end{equation*}
  is an equivalence.
\end{theorem}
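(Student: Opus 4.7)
The plan is to extend the strategy of \cref{tloc_k} to the bicategorical level. Essential surjectivity on objects is already provided by \cref{tloc_k}: every symmetric monoidal \(\D(k)\)-linear colimit-preserving functor \(\Shv(X;\D(k))\to\Shv(Y;\D(k))\) is of the form~\(f^*\) for a unique locale map \(f\colon Y\to X\). What remains is to show, for each pair of locale maps \(f,g\colon Y\to X\), that the induced map on morphism categories
\begin{equation*}
  \Cat{Loc}(Y,X)(f,g)
  \to
  \Fun_{\D(k)}^{\otimes}\bigl(\Shv(X;\D(k)),\Shv(Y;\D(k))\bigr)(f^*,g^*)
\end{equation*}
is an equivalence. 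Since \(\Cat{Loc}(Y,X)\) is a poset, this reduces to exhibiting an inverse correspondence on objects and then verifying that the target is also \((-1)\)-truncated.

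For the correspondence, I would evaluate a symmetric monoidal natural transformation \(\alpha\colon f^*\Rightarrow g^*\) on smashing idempotents. For each open \(U\subseteq X\), the idempotent algebra \(E_U=\cofib(k_U\to k_X)\) satisfies \(f^*(E_U)\simeq E_{f^{-1}(U)}\) and \(g^*(E_U)\simeq E_{g^{-1}(U)}\) in \(\Shv(Y;\D(k))\). Symmetric monoidality makes \(\alpha_{\unit}\) canonically the identity, so naturality applied to the unit morphism \(\unit\to E_U\) exhibits \(\alpha_{E_U}\) as a morphism of idempotent algebras under the unit. By \cref{main}, such a morphism \(E_V\to E_W\) in \(\Shv(Y;\D(k))\) exists (and is then unique) exactly when \(V\le W\) as opens of~\(Y\), so the existence of \(\alpha_{E_U}\) for every~\(U\) is equivalent to \(f^{-1}(U)\le g^{-1}(U)\) for every~\(U\), i.e., to a \(2\)-morphism \(f\Rightarrow g\) in \(\Cat{Loc}(Y,X)\). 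Conversely, any such \(2\)-morphism gives inclusions \(k_{f^{-1}(U)}\hookrightarrow k_{g^{-1}(U)}\) which extend by \(\D(k)\)-linearity, colimit-preservation, and symmetric monoidality to the desired natural transformation.

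The hard part will be upgrading this bijection on objects to an equivalence of \(\infty\)-categories, i.e., showing that \(\Fun_{\D(k)}^{\otimes}(f^*,g^*)\) is \((-1)\)-truncated. My plan is as follows: by \cref{main}, the \(\infty\)-category of idempotent algebras under~\(\unit\) in \(\Shv(Y;\D(k))\) is equivalent to a poset, so each \(\alpha_{E_U}\) is uniquely determined up to contractible choice. Via the fiber sequence \(k_U\to k_X\to E_U\) together with \(\alpha_{k_X}=\id\), this forces the component \(\alpha_{k_U}\) to be uniquely determined as a map in the slice \(\Shv(Y;\D(k))_{/\unit}\). Finally, since \(\Shv(X;\D(k))\) is generated under \(\D(k)\)-linear colimits by the family \(\{k_U:U\text{ open in }X\}\) and both \(f^*\) and \(g^*\) preserve such colimits, \(\alpha\) itself is uniquely determined by its restriction to this generating family. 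Propagating these uniqueness statements cleanly through the full symmetric monoidal and \(\infty\)-categorical coherences is the principal technical point to address.
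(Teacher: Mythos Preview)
Your approach is genuinely different from the paper's, and the gap you flag at the end is precisely what the paper's argument is designed to avoid.

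The paper does not analyze symmetric monoidal natural transformations directly at all. Instead it uses a one-line reduction: a functor of \(\infty\)-categories is an equivalence if and only if it becomes one after applying \(\Fun(\Delta^{1},\X)^{\simeq}\). For the source, \(\Fun(\Delta^{1},\Cat{Loc}(Y,X))^{\simeq}\) is identified with \(\Cat{Loc}(Y',X)^{\simeq}\), where \(Y'\) is the locale whose frame is \(\Fun(\Delta^{1},G)\) for \(G\) the frame of~\(Y\). For the target, \(\Fun(\Delta^{1},\Fun_{\D(k)}^{\otimes}(\Shv(X;\D(k)),\Shv(Y;\D(k))))^{\simeq}\) is identified with \(\Fun_{\D(k)}^{\otimes}(\Shv(X;\D(k)),\Shv(Y';\D(k)))^{\simeq}\). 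The result then follows by applying \cref{tloc_k} with \(Y'\) in place of~\(Y\). No componentwise analysis, no coherence bookkeeping.

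Your direct strategy is not wrong, but the step you call ``the principal technical point'' is a real gap, and it is not just a matter of writing things out carefully. Knowing that each component \(\alpha_{E_U}\) (and hence \(\alpha_{k_U}\)) lies in a contractible space does not, by itself, force the full datum \(\alpha\) to be unique: a symmetric monoidal natural transformation carries higher naturality cells for every morphism and every tensor product, and you would need to show that all of this extra structure is determined once the components on the \(k_U\) are pinned down. This amounts to proving that restriction along the inclusion of the frame of opens into \(\Shv(X;\D(k))\) is fully faithful on \(\Fun_{\D(k)}^{\otimes}(\X,\cat{C})\) for arbitrary targets~\(\cat{C}\)---essentially a \(2\)-categorical enhancement of the sheaves--spectrum adjunction. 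That is provable, but it is not lighter than the paper's route. Similarly, your ``converse'' paragraph (producing \(\alpha\) from \(f\Rightarrow g\) by extending from generators) hides the same coherence problem in the other direction. The arrow-locale trick packages both existence and uniqueness into a single application of \cref{tloc_k}, which is why the paper prefers it.
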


\begin{proof}
Note that
  a functor of \(\infty\)-categories \(\cat{C}\to\cat{D}\)
  is an equivalence if and only if
  \(\Fun(\Delta^{1},\cat{C})^{\simeq}
  \to
  \Fun(\Delta^{1},\cat{D})^{\simeq}\) is an equivalence.
  By applying this to our situation,
  the desired result follows
  from \cref{tloc_k}
  by considering the situation
  where
  \(X=X\)
  and \(Y\) is the locale corresponding
  to the frame \(\Fun(\Delta^{1},G)\)
  where \(G\) is the frame of~\(Y\).
\end{proof}

\bibliographystyle{plain}
\let\SS\oldSS  \newcommand{\yyyy}[1]{}

\end{document}